\newtheorem{theorem}{Theorem}[section]
\newtheorem{lemma}[theorem]{Lemma}
\newtheorem{fact}[theorem]{Fact}
\theoremstyle{definition}
\newtheorem{definition}[theorem]{Definition}
\theoremstyle{remark}
\numberwithin{equation}{section}
\renewcommand{\t}{1}
\newcommand{\f}{0}
\newcommand{\CL}{{\sf CL}}
\newcommand{\X}{{\bf X}}
\newcommand{\xy}{\mathcal{C}}
\newcommand{\pe}[1]{{\color{black} #1}} 
\newcommand{\ds}[1]{{\color{black} #1}} 
\newcommand{\bdr}[1]{{\color{black} #1}} 
\providecommand*{\Dashv}{%
  \mathrel{%
    \mathpalette\@Dashv\vDash
}
  }%
\newcommand*{\@Dashv}[2]{%
  \reflectbox{$\m@th#1#2$}%
}
\title{\textsc{On three-valued presentations of classical logic}\thanks{[This is the penultimate version, please use the published version in RSL for citation and reference]. We acknowledge Rohan French our Editor and two anonymous referees for detailed and very helpful comments. We are grateful to a referee for catching and allowing us to correct a mistake in a definition. Thanks also to the members of the Buenos Aires Logic Group for very helpful feedback on earlier versions of this paper, and to several audiences, in particular at the 2021 Navarra workshop on logical consequence held in Pamplona. This collaboration was initiated during a visit of DS to ENS Paris in 2020, and received support from the French programs ANR-17-EURE0017 (FrontCog) and ANR-19-CE28-0019-01 (Ambisense), and from PLEXUS (grant agreement n$^{\circ}$ 101086295), a Marie Sk\l odowska-Curie action funded by the EU under the Horizon Europe Research and Innovation Programme. BDR acknowledges a postdoctoral scholarship from CONICET during the time this paper was written. All four authors contributed substantially to the results, writing, and shape of this paper, and declare no conflict of interest.}}
\author{Bruno Da R\'e, Damian Szmuc, Emmanuel Chemla, Paul \'Egr\'e}
\date{}
\begin{document}

\maketitle

\begin{abstract}
   \noindent Given a three-valued definition of validity, which choice of three-valued truth tables for the connectives can ensure that the resulting logic coincides exactly with classical logic? We give an answer to this question for the five monotonic consequence relations $st$, $ss$, $tt$, $ss\cap tt$, and $ts$, when the connectives are negation, conjunction, and disjunction. For $ts$ and $ss\cap tt$ the answer is trivial (no scheme works), and for $ss$ and $tt$ it is straightforward (they are the collapsible schemes, in which the middle value acts like one of the classical values). For $st$, the schemes in question are the Boolean normal schemes that are either monotonic or collapsible.
\end{abstract}

\section{Characterizing classical logic}
\label{sec:intro}

Our goal in this paper is to provide a characterization of different ways in which classical logic can be presented in a three-valued setting. More precisely, our goal is to inventory which three-valued truth tables for negation, conjunction and disjunction can be paired with three-valued definitions of validity so as to yield exactly the same inferences that are obtained in more standard presentations of classical logic. While this project is mostly theoretical, it also has philosophical and conceptual motivations, about which we shall say more after stating the central results.

Toward our main goal, we first need to say more about the more standard ways in which classical logic has been characterized. Given a denumerable set of propositional variables $P=\{p, q, r, p', q', r',..\}$, a propositional logic \pe{is} a triple $\langle \mathcal{L}, C, \vdash\rangle$ such that $C$ is a finite set of $n$-ary connectives, $\mathcal{L}$ is the set of formulae generated from $P$ by application of the connectives in $C$, and $\vdash$ \pe{is} a relation between sets of formulae in $\mathcal{L}$. In this paper, we will mainly focus on the set $C=\{\neg, \vee, \wedge\}$, comprised of negation, disjunction and conjunction, forming a standard set of connectives in presentations of classical logic.

Given a propositional logic, what characterizes this logic as classical? One prominent answer to this question relies on two-valued semantics. On that view, a propositional logic is classical if the connectives are interpretable by specific two-valued truth functions, and $\vdash$ is interpretable by a specific relation between sets of truth values (where the values in question can be represented by $1$ and $0$, standing for True and False). Thus, for negation, conjunction, and disjunction to be classical, they must be interpretable by functions coextensional with $f_{\neg}(x)=1-x, f_{\vee}(x,y)=max(x,y), f_{\wedge}(x,y)=min(x,y)$ on the set $\mathcal{V}=\{1,0\}$. Moreover, $\vdash$ is classical provided $\Gamma\vdash \Delta$ if and only if for every valuation function $v$ which is a homomorphism from $(\mathcal{L},(\neg, \vee, \wedge))$ to $(\mathcal{V},(f_{\neg}, f_{\vee}, f_{\wedge}))$, $\{v(A) : A\in \Gamma\}\subseteq\{1\}$ implies $\{v(B) : B\in \Delta\}\cap\{1\}\neq \emptyset$. That is, for every valuation, the truth of the premises in $\Gamma$ implies the truth of some conclusion in $\Delta$, which we can write $\Gamma \vDash_2 \Delta$. 

But what justifies the choice of these tables, and of this definition of validity? 
One possibility to answer this question is to look at syntax, namely proof-theory. Of particular interest to us is Gentzen's perspective on the connectives and on the consequence relation. The leading idea behind Gentzen's approach in his seminal work \cite{gentzen} is that what makes a logic classical is the fact that the connectives and the consequence relation obey specific rules. The way Gentzen describes this is by specifying on the one hand \textit{structural rules} governing the consequence relation $\vdash$, and on the other \textit{operational rules} governing the connectives. Arguably, this perspective is more explanatory than the semantic perspective, because it tells us how inferences are shaped to begin with.

As structural rules, Gentzen proposed various properties such as reflexivity, monotonicity, contraction, and the Cut rule. 
For operational rules Gentzen proposed analytic rules, telling us how an inference involving a connective in premise position or in conclusion position depends on other inferences not involving that connective but only involving subformulae. For negation, conjunction, and disjunction, the rules of his calculus $\mathbf{LK}$ are as follows:

 \begin{figure}[ht]
\begin{center}
 \begin{tabular}{ccc}
 
 \AxiomC{$\Gamma\vdash A, \Delta$}
 \LeftLabel{}
 \UnaryInfC{$\Gamma, \neg A \vdash \Delta$}
 \DisplayProof

 &

 \AxiomC{$\Gamma, A, B\vdash \Delta$}
 \LeftLabel{}
 \UnaryInfC{$\Gamma, A\wedge B\vdash \Delta$}
 \DisplayProof

&

\AxiomC{$\Gamma,  A\vdash \Delta$}
 \AxiomC{$\Gamma , B \vdash \Delta$}
 \LeftLabel{}
 \BinaryInfC{$\Gamma, A\vee B \vdash  \Delta$}
 \DisplayProof

\\[2em]

 \AxiomC{$\Gamma, A \vdash \Delta$}
 \LeftLabel{}
 \UnaryInfC{$\Gamma \vdash \neg A, \Delta$}
 \DisplayProof
 
 &
 
\AxiomC{$\Gamma  \vdash A, \Delta$}
 \AxiomC{$\Gamma  \vdash B, \Delta$}
 \LeftLabel{}
 \BinaryInfC{$\Gamma  \vdash A\wedge B,  \Delta$}
 \DisplayProof

&

\AxiomC{$\Gamma\vdash A, B, \Delta$}
 \LeftLabel{}
 \UnaryInfC{$\Gamma\vdash A\vee B, \Delta$}
 \DisplayProof

\end{tabular}
\end{center}

 \caption{\textbf{LK} rules for negation (left), conjunction (middle), disjunction (right).}
 \label{fig:LK-rules}
 \end{figure}

Does it matter whether one starts from a proof-theoretic or from a semantic characterization of classical logic? One may say that it does not, considering that the semantic and the syntactic perspective can be made to coincide. Gentzen's sequent calculus $\mathbf{LK}$, which characterizes $\vdash$ syntactically, is sound and complete for the semantic interpretation $\vDash_2$ of the consequence relation between sets of formulae.
However, Gentzen \cite{gentzen} has shown that Cut is eliminable from $\mathbf{LK}$. In other words, the set of provable sequents in $\mathbf{LK}$ minus Cut is the same as the set of provable sequents in $\mathbf{LK}$, i.e., exactly those that are classically valid.
So, in the same way in which there is a different syntactic characterization of classical logic than the one based on $\mathbf{LK}$, one can also ask if there can be different semantic characterizations of classical logic beside the two-valued approach. 

As it turns out, some authors have provided various three-valued semantics for classical logic. For instance, Girard in \cite{girard1987proof} offers a non compositional semantics based on three-valued valuations (the so-called {Sch\"utte valuations}), while Cobreros et al. \cite{cobreros2012tolerant} do the same using the Strong Kleene valuations, and more recently, Szmuc and Ferguson \cite{szmucferguson} \bdr{and Ferguson \cite{ferguson2022monstrous}} show that the Weak Kleene valuations also work. All of these characterizations are given by the so-called $st$-consequence relation, defined by the fact that when all premises in $\Gamma$ take the value $1$ in the set $\{1, \sfrac{1}{2}, 0\}$, some conclusion in $\Delta$ takes a value other than $0$---see \cite{frankowski} and \cite{blasio2017} for related discussions of this notion of consequence.

In \cite{chemla2019many}, it is shown that beside $st$, other substructural consequence relations admit connectives satisfying Gentzen's operational rules, and are representable by means of three-valued operators. A case of interest is the non-reflexive relation $ts$, defined by the fact that when all premises in $\Gamma$ take a value other than $0$, some conclusion takes the value $1$---see \cite{malinowski}, \cite{French2016} and \cite{NicolaiRossi2018}. Moreover, \cite{chemla2019many} shows that when the language contains constants for the truth values, $ts$ admits as Gentzen-regular connectives the same Strong Kleene negation, conjunction and disjunction as $st$. Similar results, both positive and negative, are obtained for alternative definitions of logical consequence, in particular \pe{for} $ss$ (preservation of the value 1 from premises to conclusion), \pe{for} $tt$ (preservation of non-falsity) and \pe{for} their intersection $ss\cap tt$.\footnote{\ds{The consequence relation $ss \cap tt$ over the Strong Kleene valuations renders the well-known logic $\mathrm{RM}_{fde}$, i.e., the first-degree entailment fragment of the relevant logic $\mathrm{R}$-mingle, as well documented and discussed in \cite{dunn1976b}.}}

The results in \cite{chemla2019many}, however, did not purport to give a trivalent characterization of \textit{classical} logic as defined above, namely in terms of both structural and operational rules. Instead, they focus only on operational rules, and for the most part they assume that the language can express all truth values, including the third value $\sfrac{1}{2}$. Given these results, we are led to the following more general question: {\it what are all the three-valued schemes that can be used to characterize exactly those inferences valid in the two-valued presentation of classical logic, i.e., $\vDash_2$?} To answer this question, we determine, for the five definitions of semantic validity mentioned above ($st$, $ss$, $tt$, $ss\cap tt$, and $ts$), which three-valued truth tables can be assigned to negation, conjunction, and disjunction so as to yield all and only the inferences of {the two-valued presentation of} classical logic. For $ts$, the answer is trivial: no scheme will work, since $p\vdash p$ fails in $ts$---see, e.g., \cite{malinowski} and \cite{ cobreros2012tolerant}. For the remaining four definitions of validity, the answer is less obvious, in particular in the case of $st$. For $st$, it is a contested matter whether it supports classical \textit{meta-inferences} such as the Cut rule---see, e.g., \cite{cobreros2012tolerant} and \cite{dicher2019st}. However, here we are interested primarily in whether $st$ can support the same classical \textit{inferences} as two-valued semantics.

Our work proceeds as follows: in Section \ref{sec:definitions} we start by a review of three-valued definitions of validity, with an indication of the valuation schemes playing a central role in our results. Section \ref{sec:main} presents our main results, whose proof we defer to the Appendix to ease reading. Section \ref{sec:discussion} concludes with comparisons and a discussion of the philosophical value of those results.

\section{Definitions}\label{sec:definitions}

 The question we are investigating in this paper can be put as follows: given a three-valued definition of logical consequence $\xy$, what set of truth tables $\X$ (or scheme) for the connectives can be such as to ensure that the resulting consequence relation $\vDash^{\xy}_{\X}$ coincides with classical consequence. In this section  we first introduce the five notions of validity of interest in a trivalent setting, where the truth values are going to be 1, $\sfrac{1}{2}$, and 0. We then define the relevant properties of connectives and their truth tables, and give an overview of the way in which these properties constrain classicality for different consequence relations.
 
\subsection{Logical Consequence}

This section introduces five entailment relations corresponding to distinct ways of thinking of validity in a three-valued setting. They include the so-called pure, mixed and intersective definitions of logical consequence, as defined in \cite{chemla2017characterizing}. 

\begin{definition} 
Let a valuation be a function $v$ from formulae to the set $\{ 1, \sfrac{1}{2}, 0 \}$.
\bdr{
 \begin{description}
 
   \item[($ss$-validity)] $\Gamma \vDash^{ss}\Delta$ if and only if for every valuation $v$,\\ if $v(A)= 1$ for every $A \in \Gamma$, then $v(B)= \t$ for some $B \in \Delta$.

   \item[($tt$-validity)] $\Gamma \vDash^{tt}\Delta$ if and only if for every valuation $v$,\\ if $v(A) \in \{\t, \sfrac{1}{2}\}$ for every $A \in \Gamma$, then $v(B) \in \{\t, \sfrac{1}{2}\}$ for some $B \in \Delta$.

   \item[($st$-validity)] $\Gamma \vDash^{st}\Delta$ if and only if for every valuation $v$,\\ if $v(A)= 1$ for every $A \in \Gamma$, then $v(B) \in \{\t, \sfrac{1}{2}\}$ for some $B \in \Delta$.
   
   \item[($ts$-validity)] $\Gamma \vDash^{ts}\Delta$ if and only if for every valuation $v$,\\ if $v(A) \in \{\t, \sfrac{1}{2}\}$ for every $A \in \Gamma$, then $v(B) = 1$ for some $B \in \Delta$.

    \item[($ss \cap tt$-validity)] $\Gamma \vDash^{ss\cap tt}\Delta$ if and only if $\Gamma \vDash^{ss}\Delta$ and 
            $\Gamma \vDash^{tt}\Delta$;\\ or equivalently, if and only if for every valuation $v$,\\$\inf\{v(A)|A \in \Gamma\} \leq_{\mathbb{Q}} \sup \{v(B)|B \in \Delta\}$, where $\leq_{\mathbb{Q}}$ is the usual order over the rational numbers.
            
         \end{description}

}

\end{definition}

Basically, the pure notions of validity are the ones definable in terms of the preservation of a fixed set of designated values between premises and conclusions, they include $ss$ (preservation of value 1) and $tt$ (preservation of values that are not 0). The mixed notions of validity $st$ and $ts$ define logical consequence not in terms of preservation but in terms of specific constraints between values that can differ for premises and conclusions (not going from truth to falsity for $st$, or from non-falsity to non-truth for $ts$). Finally, the intersective notion of validity $ss\cap tt$  has also been called order-theoretic in \cite{field2008, chemla2017characterizing}, because it is equivalent to requiring that, relative to the total ordering of truth-values $0<\sfrac{1}{2}<1$, the largest value of the conclusions should not be smaller than the smallest value of the premises. 

Although more entailment relations are conceivable, in \cite{chemla2017characterizing} these five were identified as the so-called {\it intersective mixed} consequence relations.\footnote{\bdr{These consequence relations are called \textit{intersective mixed consequence relations} in \cite{chemla2017characterizing} because they are all the consequence relations definable as intersections between mixed consequence relations (which include pure consequence relations as defined in \cite{chemla2017characterizing}). From the lattice displayed in Figure ~\ref{consequence-relations} notice that $ss \cap tt$ is the only intersective consequence relation which is not a pure or a mixed consequence relation. Given the inclusion between the logics, the other consequence relations are all the possible intersections between mixed and pure consequence relations. See \cite{chemla2017characterizing} for more details.}} They form a natural class by corresponding to the three-valued \textit{monotonic} consequence relations \pe{(namely such that if $\Gamma \vdash \Delta$, then $\Gamma,\Gamma' \vdash \Delta,\Delta'$)}.
These consequence relations are related as depicted in Figure~\ref{consequence-relations}, in which a lower relation is an extensional subset of a higher relation.

  {\small 
\begin{figure}[h]
\centering
\begin{tikzpicture}
  \node (max) at (0,2) {$st$};
  \node (a) at (-2,0) {$ss$};
  \node (c) at (2,0) {$tt$};
  \node (d) at (0,-2) {$ss \cap tt$};
  \node (min) at (0,-4) {$ts$};
  
  \draw (max) -- (a);
  \draw (max) -- (c);
  \draw (a) -- (d);
  \draw (c) -- (d);
  \draw (d) -- (min);

\end{tikzpicture}
\caption{The five intersective mixed consequence relations}
\label{consequence-relations}
\end{figure}
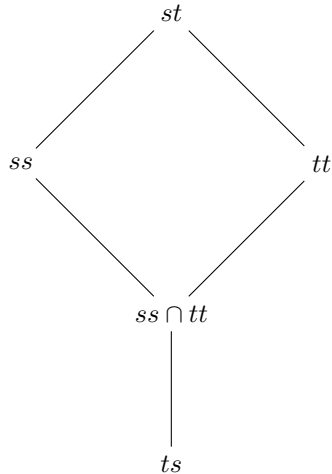
}

\subsection{Schemes for the connectives: Boolean normal, Monotonic, Collapsible}

We define a \textit{three-valued valuation scheme} $\X$ as a triple $(f_\neg, f_\wedge, f_\vee)$ of operations, namely of three-valued truth tables for the connectives. The properties of a scheme are defined in terms of the properties of its operations. Here we single out three main properties of interest: Boolean normality, monotonicity, and collapsibility.\bigskip

We first define \textit{Boolean normal} operations, that is operations that behave on Boolean values like their corresponding (``normal'') counterpart in classical logic. \pe{This property is also referred to in the literature as \textit{normality} (\cite{priest2008}), or as \textit{regularity} (\cite{priest2022interpretation}).
For more on the origin of this terminology, going back to \cite{post1921} and \cite{rescherbook}, see \cite{Teijeiro2022} and references therein}.\bdr{\footnote{\bdr{The authors in \cite[p.129]{carnielli2000formal} have introduced a related property which they called \textit{hyper-classicality} which they defined as follows: ``a three-valued matrix is hyper-classical if the restriction of its associated function to the classical domain (values 1 and 0) will have its image in the classical codomain (values 1 and 0)''. According to this definition, all Boolean normal schemes are hyper-classical.}}}

\begin{definition}[Boolean normality]
\label{def:boolean-normality}
An $n$-ary operation $\star$ is \textit{Boolean normal} if and only if for $\{a_1,...,a_n\} \subseteq \{0,1\}$, $\star(a_1,...,a_n)=\star^{\CL}(a_1,...,a_n)$, where $\star^{\CL}$ is the corresponding operation over the usual two-element Boolean algebra. 
A scheme is \textit{Boolean normal} iff each of its operations is.
\end{definition}

\bdr{
Next, we assume that truth values are ordered with regard to $\leq_{\rm I}$ in terms of their so-called informational value, as described in \cite{fitting1994}, that is: $\sfrac{1}{2} <_{\rm I} 0$ and $\sfrac{1}{2} <_{\rm I} 1$, as depicted in Figure \ref{fig:inforder}. Given such an ordering relation $<_{\rm I}$, we can define the componentwise ordering based on this order as follows: $\langle a_1,...,a_n \rangle \leq_{\rm I}^{comp} \langle b_1,...,b_n \rangle$ if and only if $a_j \leq_{\rm I} b_j$ for all $1 \leq_{\rm I} j \leq _{\rm I} k$.

\begin{definition}[Monotonicity]
\label{def:monotonicity}
An $n$-ary operation $\star$ is (upward) \textit{monotonic} if and only if whenever $\langle a_1,...,a_n \rangle \leq_{\rm I}^{comp} \langle b_1,...,b_n \rangle$ then $\star(a_1,...,a_n) \leq_{\rm I} \star(b_1,...,b_n)$. 
A scheme is monotonic if and only if each of its operations is.
\end{definition}

}

\begin{figure}[ht]
\centering
{\small 
\begin{tikzpicture}
  \node (max) at (0,0) {${\sfrac{1}{2}}$};
  \node (a) at (-1,1) {$1$};
  \node (c) at (1,1) {$0$};
  \draw (a) -- (max) -- (c);
\end{tikzpicture}
}
\caption{Hasse diagram of the information order $\leq_{\rm I}$}\label{fig:inforder}
\end{figure}
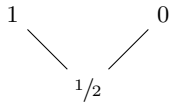

In 2D truth table format, monotonic operations are such that no two distinct classical values are found next to one another, horizontally or vertically. This is easy to prove and we refer to Appendix \ref{app:mon} for a tighter characterization.

By combining Boolean normality and monotonicity, we obtain the Boolean normal monotonic operations for negation, conjunction and disjunction presented in the truth tables in Figure~\ref{boolean-normal-monotonic}. Here and elsewhere, when a cell contains more than one value, this means that any choice of a value renders an operation with the desired properties, independently of the choice of values in other cells (in this case, Boolean normality and monotonicity). 

\begin{figure}[ht]
\centering
\begin{tabular}{cccc}
\begin{tabular}{c|c}
				 
				& $\neg$    \\ \hline
				$\t$ & $\f$  \\
				$\sfrac{1}{2}$ & $\sfrac{1}{2}$\\
				$\f$ & $\t$  \\  
			\end{tabular}
 
			&
			
			\begin{tabular}{c|ccc}
				 
	$\wedge$ & $\t$ & $\sfrac{1}{2}$ & $\f$ \\   \hline
	$\t$ & $\t$ & $\sfrac{1}{2}$ & $\f$ \\ 
	$\sfrac{1}{2}$ & $\sfrac{1}{2}$ & $\sfrac{1}{2}$ & $\sfrac{1}{2}, \f$ \\  
	$\f$ & $\f$ & $\sfrac{1}{2}, \f$ & $\f$ \\  
			\end{tabular}
			
			&
			
				\begin{tabular}{c|ccc}
				 
				$\vee$ & $\t$ & $\sfrac{1}{2}$ & $\f$ \\   \hline
				$\t$ & $\t$ & $\t, \sfrac{1}{2}$ & $\t$ \\ 
				$\sfrac{1}{2}$ & $\t, \sfrac{1}{2}$ & $\sfrac{1}{2}$ & $\sfrac{1}{2}$ \\  
				$\f$ & $\t$ & $\sfrac{1}{2}$ & $\f$ \\

			\end{tabular}

			\end{tabular}
\caption{All the Boolean normal monotonic schemes}
\label{boolean-normal-monotonic}
\end{figure}

To introduce our last relevant property, we define ``$\alpha$-collapsers'', operations $\tau_\alpha$ defined as $\tau_\alpha(0)=0$, $\tau_\alpha(1)=1$, and $\tau_\alpha(\sfrac{1}{2})=\alpha$, for $\alpha=0$ or $\alpha=1$. As can be seen, collapsers preserve Boolean values, and collapse the third value onto $\alpha$.

\begin{definition}[Collapsibility]
An $n$-ary operation $\star$ is an $\alpha$-collapsible version of a classical operation $\star^{\CL}$ iff $\tau_\alpha(\star(x_1, \dots, x_n)) = \star^{\CL}(\tau_\alpha(x_1), \dots, \tau_\alpha(x_n))$. A scheme is $\alpha$-collapsible if and only if all of the operations are $\alpha$-collapsible.
\end{definition}

In terms of truth tables, the $1$-collapsible (henceforth, truth-collapsible) and the $0$-collapsible (falsity-collapsible) operations are as reported in Figures~\ref{truth-collapsible}~and~\ref{falsity-collapsible}, respectively.

\begin{figure}[h]
\centering
\begin{tabular}{ccc}
\begin{tabular}{c|c}
               & $\neg$ \\ \hline
$\t$           & $\f$   \\
$\sfrac{1}{2}$ & $\f$   \\
$\f$           & $\t,\sfrac{1}{2}$  
\end{tabular}
&

\begin{tabular}{c|ccc}
$\wedge$       & $\t$ & $\sfrac{1}{2}$ & $\f$ \\ \hline
$\t$           & $\t,\sfrac{1}{2}$               & $\t, \sfrac{1}{2}$ & $\f$ \\
$\sfrac{1}{2}$ & $\t, \sfrac{1}{2}$ & $\t, \sfrac{1}{2}$ & $\f$ \\
$\f$           & $\f$               & $\f$               & $\f$
\end{tabular}
&
\begin{tabular}{c|ccc}
$\vee$         & $\t$ & $\sfrac{1}{2}$ & $\f$ \\ \hline
$\t$           & $\t,\sfrac{1}{2}$               & $\t, \sfrac{1}{2}$ & $\t,\sfrac{1}{2}$               \\
$\sfrac{1}{2}$ & $\t, \sfrac{1}{2}$ & $\t, \sfrac{1}{2}$ & $\t, \sfrac{1}{2}$ \\
$\f$           & $\t,\sfrac{1}{2}$               & $\t, \sfrac{1}{2}$ & $\f$ 
\end{tabular}
\end{tabular}
\caption{All the truth-collapsible schemes}\label{truth-collapsible}
\end{figure}

\begin{figure}[h]
\centering
\begin{tabular}{ccc}
\begin{tabular}{c|c}
               & $\neg$ \\ \hline
$\t$           & $\sfrac{1}{2},\f$   \\
$\sfrac{1}{2}$ & $\t$   \\
$\f$           & $\t$  
\end{tabular}
&
\begin{tabular}{c|ccc}
$\wedge$       & $\t$ & $\sfrac{1}{2}$ & $\f$ \\ \hline
$\t$           & $\t$               & $\sfrac{1}{2}, \f$ & $\sfrac{1}{2},\f$               \\
$\sfrac{1}{2}$ & $\sfrac{1}{2}, \f$ & $\sfrac{1}{2}, \f$ & $\sfrac{1}{2}, \f$ \\
$\f$           & $\sfrac{1}{2},\f$               & $\sfrac{1}{2}, \f$ & $\sfrac{1}{2},\f$      
\end{tabular}
&
\begin{tabular}{c|ccc}
$\vee$         & $\t$ & $\sfrac{1}{2}$ & $\f$ \\ \hline
$\t$           & $\t$ & $\t$               & $\t$               \\
$\sfrac{1}{2}$ & $\t$ & $\sfrac{1}{2}, \f$ & $\sfrac{1}{2}, \f$ \\
$\f$           & $\t$ & $\sfrac{1}{2}, \f$ & $\sfrac{1}{2},\f$              
\end{tabular}
\end{tabular}
\caption{All the falsity-collapsible schemes}\label{falsity-collapsible}
\end{figure}

We can see how these translates the definitions. First, the Boolean corners of the table should yield the same values as the corresponding Boolean operations, up to $\tau_\alpha$. Second, in an area in which one can move by applying $\tau_\alpha$ to one or both of the inputs, all output values should be the same, again up to $\tau_\alpha$. Therefore, the truth-collapsible scheme is one in which the values $1$ and $\sfrac{1}{2}$ play the same functional role, whereas in the falsity-collapsible case the values $0$ and $\sfrac{1}{2}$ play the same role. Figures~\ref{boolean-normal-truth-collapsible}~and~\ref{boolean-normal-falsity-collapsible} display the \textit{Boolean normal collapsible operations}.
\begin{figure}[ht]
\centering
\begin{tabular}{ccc}
\begin{tabular}{c|c}
               & $\neg$ \\ \hline
$\t$           & $\f$   \\
$\sfrac{1}{2}$ & $\f$   \\
$\f$           & $\t$  
\end{tabular}
&

\begin{tabular}{c|ccc}
$\wedge$       & $\t$ & $\sfrac{1}{2}$ & $\f$ \\ \hline
$\t$           & $\t$               & $\t, \sfrac{1}{2}$ & $\f$ \\
$\sfrac{1}{2}$ & $\t, \sfrac{1}{2}$ & $\t, \sfrac{1}{2}$ & $\f$ \\
$\f$           & $\f$               & $\f$               & $\f$
\end{tabular}
&
\begin{tabular}{c|ccc}
$\vee$         & $\t$ & $\sfrac{1}{2}$ & $\f$ \\ \hline
$\t$           & $\t$               & $\t, \sfrac{1}{2}$ & $\t$               \\
$\sfrac{1}{2}$ & $\t, \sfrac{1}{2}$ & $\t, \sfrac{1}{2}$ & $\t, \sfrac{1}{2}$ \\
$\f$           & $\t$               & $\t, \sfrac{1}{2}$ & $\f$ 
\end{tabular}
\end{tabular}
\caption{All the Boolean normal truth-collapsible schemes}\label{boolean-normal-truth-collapsible}
\end{figure}

\begin{figure}[ht]
\centering
\begin{tabular}{ccc}
\begin{tabular}{c|c}
               & $\neg$ \\ \hline
$\t$           & $\f$   \\
$\sfrac{1}{2}$ & $\t$   \\
$\f$           & $\t$  
\end{tabular}
&
\begin{tabular}{c|ccc}
$\wedge$       & $\t$ & $\sfrac{1}{2}$ & $\f$ \\ \hline
$\t$           & $\t$               & $\sfrac{1}{2}, \f$ & $\f$               \\
$\sfrac{1}{2}$ & $\sfrac{1}{2}, \f$ & $\sfrac{1}{2}, \f$ & $\sfrac{1}{2}, \f$ \\
$\f$           & $\f$               & $\sfrac{1}{2}, \f$ & $\f$      
\end{tabular}
&
\begin{tabular}{c|ccc}
$\vee$         & $\t$ & $\sfrac{1}{2}$ & $\f$ \\ \hline
$\t$           & $\t$ & $\t$               & $\t$               \\
$\sfrac{1}{2}$ & $\t$ & $\sfrac{1}{2}, \f$ & $\sfrac{1}{2}, \f$ \\
$\f$           & $\t$ & $\sfrac{1}{2}, \f$ & $\f$              
\end{tabular}
\end{tabular}

\caption{All the Boolean normal falsity-collapsible schemes}\label{boolean-normal-falsity-collapsible}
\end{figure}

Finally, notice that no collapsible negation is monotonic, because the third value yields a determinate value for collapsible negations, and an indeterminate for the monotonic negation. This implies that no collapsible scheme is monotonic, and conversely. 

Various examples from the literature can be given to illustrate those schemes. The well-known Strong Kleene scheme and the Bochvar/Weak Kleene scheme are both Boolean normal monotonic schemes. Boolean normal monotonic schemes also include other schemes, such as \ds{the scheme characteristic of Lisp logic as discussed in Fitting's \cite{fitting1994}, first introduced by McCarthy in \cite{McCarthy}---also to be found in the presupposition projection literature, in particular in Peters' \cite{peters1979truth}.\footnote{\pe{This scheme can be viewed as a compromise between a Strong Kleene and a Weak Kleene scheme in that it is asymmetric: binary operations are understood as Weak Kleene on their first argument, and Strong Kleene on the second.}}} Likewise, the collapsible schemes are not just theoretical possibilities: an example of truth-collapsible scheme can be found in Cantwell's \cite{cantwell2008logic}, under the name ``non-bivalent classical valuation''. Cantwell gives tables for negation, conjunction, and disjunction that are Boolean normal truth-collapsible. Additionally, he defines a conditional operator (originally introduced independently by Cooper in \cite{cooper1968propositional}), which is not Boolean normal (it yields the value $\sfrac{1}{2}$ when the antecedent has value $0$, and takes the value of the consequent otherwise), but which could be shown to be truth-collapsible.\footnote{\ds{For visually-inclined readers, we include Cantwell's truth-tables below:

\medskip

{\scriptsize
\begin{tabular}{cccc}
\begin{tabular}{c|c}
               & $\neg$ \\ \hline
$\t$           & $\f$   \\
$\sfrac{1}{2}$ & $\f$   \\
$\f$           & $\t$  
\end{tabular}
&
\begin{tabular}{c|ccc}
$\wedge$       & $\t$ & $\sfrac{1}{2}$ & $\f$ \\ \hline
$\t$           & $\t$               & $\sfrac{1}{2}$ & $\f$ \\
$\sfrac{1}{2}$ & $\sfrac{1}{2}$ & $\sfrac{1}{2}$ & $\f$ \\
$\f$           & $\f$               & $\f$               & $\f$
\end{tabular}
&
\begin{tabular}{c|ccc}
$\vee$         & $\t$ & $\sfrac{1}{2}$ & $\f$ \\ \hline
$\t$           & $\t$               & $\t$ & $\t$               \\
$\sfrac{1}{2}$ & $\t$ & $\sfrac{1}{2}$ & $\sfrac{1}{2}$ \\
$\f$           & $\t$               & $\sfrac{1}{2}$ & $\f$ 
\end{tabular}
&
\begin{tabular}{c|ccc}
$\rightarrow$         & $\t$ & $\sfrac{1}{2}$ & $\f$ \\ \hline
$\t$           & $\t$               & $\sfrac{1}{2}$ & $\f$               \\
$\sfrac{1}{2}$ & $\t$ & $\sfrac{1}{2}$ & $\f$ \\
$\f$           & $\sfrac{1}{2}$               & $\sfrac{1}{2}$ & $\sfrac{1}{2}$ 
\end{tabular}
\end{tabular}
}}
}

\subsection{Interaction of logical consequence and schemes: Overview of the results}

With the definition of logical consequence and of a valuation scheme in hand,
we can  restate our main goal more precisely as follows.

\begin{definition}

Given a scheme $\X$ and a definition of logical consequence $\xy$, we write $\vDash_{\X}^{\xy}$ the corresponding \textit{consequence relation}, namely the set of valid arguments based on the scheme $\X$ relying on the $\xy$ definition of validity.

\end{definition}

\noindent Our key question is: for a given definition of validity $\xy$, what scheme is inferentially classical? The relevant definition of inferential classicality is as follows:

\begin{definition}

Given a scheme $\X$ and a definition of logical consequence $\xy$, we say $\vDash_{\X}^{\xy}$ is \textit{inferentially classical} if and only if for every pair of sets of formulae $\Gamma, \Delta$, we have $\Gamma \vDash_{\X}^{\xy} \Delta$ if and only if $\Gamma \models_2 \Delta$.

\end{definition}

Before stating the main result of this paper, we justify the choice of the properties of the schemes highlighted above. Boolean normality provides an upper bound for classicality: {for every} consequence relation, it ensures that the arguments it supports involving negation, conjunction, and disjunction are a subset of the classical arguments (see Lemma \ref{lemma:bn}), and is furthermore a necessary property for this to hold with many consequence relations (see Lemma \ref{lemma:normal}). Monotonicity, for the specific case of $st$, provides a lower bound: it ensures that the classical inferences are a subset of the ones supported (Lemma \ref{lemma:monotonic}). Collapsibility, finally, provides either a lower bound or an upper bound, depending on which consequence is considered (Lemmas \ref{lemma:truthcollapsible}, \ref{lemma:falsitycollapsible}, \ref{lemma:truth-collapsible and tt}, \ref{lemma:falsity-collapsible and ss}).

\section{Main characterization results}\label{sec:main}

With these ingredients in place we are ready to present the main results of this paper. The results fall in two main classes : each of the consequence relations $st, ss, tt$ supports a positive characterization of classical logic ; $ss\cap tt$ and $ts$, on the other hand, fail to support classical logic for any scheme. We start with the presentation of those negative results, for which the explanation is straightforward, reading Figure \ref{consequence-relations} bottom up.  

\subsection{Negative results: $ts$ and $ss \cap tt$}

As is well-known, the consequence relation $ts$ is nonreflexive, hence no scheme can combine with it to make it classical.

\begin{theorem}\label{theorem:ts}
$\vDash^{ts}_{\X} \ \neq \ \vDash_{2}$, for every three-valued scheme $\X$.
\end{theorem}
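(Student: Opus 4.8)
The plan is to exploit the fact that $ts$-validity fails to be reflexive, so that $\vDash^{ts}_{\X}$ cannot coincide with $\vDash_2$, which is reflexive (since $A \vDash_2 A$ always holds). The key observation is that the single third value $\sfrac{1}{2}$ is enough to block reflexivity, regardless of how the connectives are interpreted.

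Concretely, I would argue as follows. Fix an arbitrary three-valued scheme $\X = (f_\neg, f_\wedge, f_\vee)$. Let $p \in P$ be any propositional variable and consider the valuation $v$ defined on the variables by setting $v(p) = \sfrac{1}{2}$ (and extending homomorphically to all formulae via $\X$, though the value of $v$ on other formulae is irrelevant here). Now examine whether $p \vDash^{ts}_{\X} p$. By the definition of $ts$-validity, this would require: for every valuation $w$, if $w(p) \in \{1, \sfrac{1}{2}\}$ then $w(p) = 1$. But our valuation $v$ has $v(p) = \sfrac{1}{2} \in \{1, \sfrac{1}{2}\}$ while $v(p) = \sfrac{1}{2} \neq 1$, so the condition fails. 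Hence $p \not\vDash^{ts}_{\X} p$. On the other hand, reflexivity gives $p \vDash_2 p$ (trivially, if $v(p) = 1$ then some conclusion, namely $p$ itself, has value $1$). Therefore $\vDash^{ts}_{\X} \neq \vDash_2$.

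The only subtlety is to make sure that the definition of $ts$-validity is literally as stated — that it quantifies over \emph{all} valuations, where a valuation is any function from formulae to $\{1, \sfrac{1}{2}, 0\}$ respecting the scheme $\X$ — and that there always exists at least one valuation assigning $\sfrac{1}{2}$ to $p$. This is immediate since we are free to choose the values of the propositional variables arbitrarily, and the homomorphic extension is always well-defined. So there is no real obstacle; the argument is a one-line consequence of non-reflexivity of $ts$. The proof is essentially just spelling out that $p \not\vDash^{ts}_{\X} p$ for every $\X$, contrasted with $p \vDash_2 p$.
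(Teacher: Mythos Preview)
Your proof is correct and matches the paper's own argument exactly: both exhibit the failure of $p \vDash^{ts}_{\X} p$ (via the valuation sending $p$ to $\sfrac{1}{2}$) and contrast it with the classical validity $p \vDash_{2} p$. The paper's proof is just the one-line version of what you wrote out in detail.
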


\begin{proof}
For an atomic proposition $p$, independently of $\X$, $p \vDash^{ts}_{\X} p$ does not hold, while $p \vDash_{2} p$ does.
\end{proof}

In Section~\ref{sec:gr} and in Appendix~\ref{app:gr} we will show that inferential classicality can be obtained inductively from two parts, essentially distinguishing the role of formulae with and without connectives: (i)~some structural properties for atomic propositions (namely reflexivity), (ii)~some Gentzen regularity for the connectives. Here, with $ts$ we show how the first condition is broken and prevents inferential classicality, independently of the connectives.

From a structural point of view, $ss\cap tt$ is a Tarskian relation, unlike $ts$: it is reflexive, monotonic, and transitive. Despite that, it fails to support classical logic. As the following result shows, it cannot support both the Law of Excluded Middle and the principle of Explosion in a way that makes negation coherent.

\begin{theorem}
$\vDash^{ss\cap tt}_{\X} \ \neq \ \vDash_{2}$, for every three-valued scheme $\X$.
\end{theorem}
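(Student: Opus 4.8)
The plan is to show that $\vDash^{ss\cap tt}_{\X}$ either validates some non-classical inference or invalidates some classical one, whatever the negation table $f_\neg$ does on the middle value. The key observation is that $ss\cap tt$ validity is the order-theoretic condition $\inf\{v(A)\mid A\in\Gamma\}\leq_{\mathbb{Q}}\sup\{v(B)\mid B\in\Delta\}$ with $0<\sfrac12<1$, so a single-premise single-conclusion argument $A\vDash^{ss\cap tt}_{\X}B$ holds iff for every valuation $v$, $v(A)\leq_{\mathbb{Q}}v(B)$. I would first recall, via Lemma~\ref{lemma:bn}, that we may assume $\X$ is Boolean normal, since otherwise $\vDash^{ss\cap tt}_{\X}$ already fails to be a subset of $\vDash_2$ (or we can argue directly that non-normality breaks classicality for $ss\cap tt$ as well). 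So the only freedom left is the value of $f_\neg(\sfrac12)$, $f_\wedge$ and $f_\vee$ on arguments involving $\sfrac12$, and the argument should turn on $f_\neg(\sfrac12)$.

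Next I would do the case split on $f_\neg(\sfrac12)$. Consider a valuation $v$ with $v(p)=\sfrac12$ for an atom $p$. Case 1: $f_\neg(\sfrac12)=1$. Then the classically valid inference $p\vDash_2 \text{(something)}$ — better, consider $\emptyset\vDash_2 p\vee\neg p$ vs.\ $ss\cap tt$: here there is no premise, so we need $\sup\{v(p\vee\neg p)\}$... actually the cleaner target is a classical \emph{inference} that fails. Take $p\wedge\neg p\vDash_2 q$ (Explosion), which is classically valid. Under $v(p)=\sfrac12$, $v(q)=0$: we need $v(p\wedge\neg p)\leq 0$, i.e.\ $v(p\wedge\neg p)=0$. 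Since $v(p)=\sfrac12$ and $v(\neg p)=f_\neg(\sfrac12)$, whether this equals $0$ depends on $f_\wedge(\sfrac12,f_\neg(\sfrac12))$. Dually, the Law of Excluded Middle $\emptyset\vDash_2 p\vee\neg p$ requires $v(p\vee\neg p)=1$ for all $v$, i.e.\ $f_\vee(\sfrac12,f_\neg(\sfrac12))=1$. The heart of the proof is that these two demands, together with Boolean normality, cannot be jointly met: if $f_\neg(\sfrac12)=1$, then Explosion forces $f_\wedge(\sfrac12,1)=0$, contradicting Boolean-normality-flavoured monotonic behaviour — more precisely it directly contradicts what $ss\cap tt$-soundness wants, since $p\vDash_2 p\wedge(p\vee\neg p)$ classically but under $v(p)=\sfrac12$ we would get $v(p\wedge\cdots)=0<\sfrac12=v(p)$. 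If instead $f_\neg(\sfrac12)=0$, a symmetric argument using LEM (or using $p\vee\neg p\vDash_2\text{conclusion}$ patterns) fails. If $f_\neg(\sfrac12)=\sfrac12$, then $v(p\vee\neg p)=f_\vee(\sfrac12,\sfrac12)$ and $v(p\wedge\neg p)=f_\wedge(\sfrac12,\sfrac12)$; to keep LEM we need $f_\vee(\sfrac12,\sfrac12)=1$ but then by Boolean normality reasoning / the constraints on disjunction, combined with $p\vDash^{ss\cap tt}_{\X}p\vee\neg p$ being classically fine, we still get a clash — and simultaneously Explosion forces $f_\wedge(\sfrac12,\sfrac12)=0$, and $f_\vee(\sfrac12,\sfrac12)=1$ together with $f_\wedge(\sfrac12,\sfrac12)=0$ is incompatible with $\sfrac12\vDash^{ss\cap tt}_{\X}\sfrac12$-style preservation (take $p\vDash_2 p$: need $f$-values to sandwich $\sfrac12$, but they've been pushed to $0$ and $1$).

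So the spine of the argument is: to be sound (a subset of $\vDash_2$) $\X$ must respect Boolean normality and cannot make $p\vee\neg p$ a tautology \emph{and} $p\wedge\neg p$ explosive while keeping $p\vDash p$, $p\vDash p\vee\neg p$, $p\wedge\neg p\vDash p$ all valid under $ss\cap tt$. Concretely I would exhibit, for each of the three possible values of $f_\neg(\sfrac12)$, either a classical inference that is not $ss\cap tt$-valid or an $ss\cap tt$-valid inference that is not classical, using only the single valuation(s) sending atoms to $\sfrac12$ and the order characterization. I would organize this as a short lemma-free case analysis directly in the proof.

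The main obstacle I anticipate is bookkeeping the interaction between $f_\wedge$, $f_\vee$ and the chosen $f_\neg(\sfrac12)$ cleanly: one must be careful that the inference chosen as a ``witness'' really is classically valid (so Boolean normality guarantees it behaves correctly on the $0/1$ fragment) and that the chosen valuation genuinely forces the $ss\cap tt$ condition to bite. A secondary subtlety is handling non-Boolean-normal schemes up front — I would dispatch those by noting that for $ss\cap tt$, as for the other relations covered by Lemma~\ref{lemma:normal}, Boolean normality is necessary for classicality, so without loss of generality $\X$ is Boolean normal and only $f_\neg(\sfrac12)$, $f_\wedge$ and $f_\vee$ on middle-value inputs remain free; everything then reduces to the finite case check sketched above.
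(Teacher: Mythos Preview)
Your proposal has the right instincts---pit Excluded Middle against Explosion and see what they jointly force on $f_\neg(\sfrac12)$---but you are making it much harder than necessary, and the sketch as written has real gaps. The paper's proof is two lines and uses \emph{no} case split, \emph{no} appeal to Boolean normality, and \emph{no} mention of $\wedge$ or $\vee$. The trick is to work with the \emph{sequent} forms $\vDash p,\neg p$ and $p,\neg p\vDash q$ rather than the formula forms $\vDash p\vee\neg p$ and $p\wedge\neg p\vDash q$. Take $v(p)=\sfrac12$: for $\vDash p,\neg p$ to hold under $ss$, some conclusion must get value $1$, so $f_\neg(\sfrac12)=1$. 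Take $v(p)=\sfrac12$, $v(q)=0$: for $p,\neg p\vDash q$ to hold under $tt$, some premise must get value $0$, so $f_\neg(\sfrac12)=0$. Contradiction. That is the entire proof.

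By contrast, your route drags in $f_\wedge$ and $f_\vee$, which then forces you to control those tables via Boolean normality. But your justification for that step is wrong: Lemma~\ref{lemma:bn} gives the implication ``Boolean normal $\Rightarrow$ sound'', not its converse, and Lemma~\ref{lemma:normal} is stated only for $st$; since $\vDash^{ss\cap tt}_\X\subseteq\vDash^{st}_\X$, a supra-classical $st$-inference need not be $ss\cap tt$-valid, so the lemma does not transfer. More seriously, your handling of the case $f_\neg(\sfrac12)=\sfrac12$ is broken: the appeal to $p\vDash p$ cannot fail in $ss\cap tt$ (it is reflexive regardless of the scheme), so it cannot witness anything. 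One can repair that case---for instance, $f_\vee(\sfrac12,\sfrac12)=1$ (forced by LEM) makes $p\vee p\vDash p$ fail under $ss$---but by that point you have done far more work than needed. Drop the connective-level formulations and the whole detour through Boolean normality and the conjunction/disjunction tables disappears.
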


\begin{proof}
First, consider a formula $p$ and a valuation $v$ in which $v(p)=\sfrac{1}{2}$. For the classical inference $\vDash_{2} p, \neg p$ to $ss \cap tt$-hold, it must be that $v(\neg p)=1$, that is $\neg(\sfrac{1}{2})=1$. Second, consider atomic formulae $p$, $q$ and a valuation $v$ in which $v(p)=\sfrac{1}{2}$ and $v(q)=0$. For the classical inference $p, \neg p \vDash_{2} q$ to $ss \cap tt$-hold, it must be that $\neg(\sfrac{1}{2})=0$. Contradiction.
\end{proof}

\noindent This result is closely related to Theorem~4.3 of \cite{chemla2019many}, showing that $ss\cap tt$ admits no Gentzen-regular negation. The result holds even when the consequence relation is restricted to single conclusions. To validate explosion, the negation of $1$ and of $\sfrac{1}{2}$ must be $0$. To satisfy the entailment from $p$ to $\neg \neg p$ the negation of $0$ must be different from $0$ when $p$ is valued to $\sfrac{1}{2}$. To satisfy the converse entailment from $\neg \neg p$ to $p$, the negation of $0$ cannot be 1, so must be $\sfrac{1}{2}$. But then when $p$ is valued to $1$, $\neg \neg p$ is valued to $\sfrac{1}{2}$, so $p$ cannot entail $\neg \neg p$ in all cases.

A simple takeaway from this result is that when entertaining the $ss \cap tt$ definition of logical consequence there isn't a three-valued scheme ${\bf X}$ that supports the same valid inferences as the two-valued presentation of classical logic, mainly because there isn't a truth table for negation that supports the same valid inferences in that respect. But the problem isn't restricted to negation: as shown in \cite{chemla2019many} other connectives also cannot find an appropriate truth table so as to validate the intended inferences, for example, the material conditional. Interestingly enough, in \cite{chemla2019many} it is shown that some connectives (like conjunction and disjunction) do indeed have compatible truth tables that validate the target inferences---at least in the restricted language where only those connectives are featured. With these reflections, we hope to shed some light on the aforementioned impossibility regarding $ss \cap tt$, by making the appropriate qualifications.\footnote{One may wonder whether this result shows that ``negative'' or  negation-related connectives cannot be supported by any three-valued truth table when the $ss \cap tt$ consequence relation is around, but ``positive'' or non-negation-related connectives can. A discussion of this is far beyond the scope of this paper, but we hope to elaborate on this in further research.}

\bdr{Before turning to the positive results, it is important to mention that the negative results presented in this section can be easily generalized to many-valued semantics with more than three values, since the proofs of these statements appear to be independent of the number of nonclassical values. The natural requirement for this generalization is that the $ts$-consequence relation defined for this many-valued semantics be nonreflexive and that the $ss\cap tt$ be such that $ss$ lacks tautologies and $tt$ lacks \pe{logical contradictions}.}

\subsection{Positive results: $ss$, $tt$, and $st$}\label{sec:positive}

The fact that $ss$ and $tt$ can support classical logic separately follows from the simple fact that the value $\sfrac{1}{2}$ can be made to mirror the role of either $0$ or $1$ in a given scheme. This is the sense in which collapsibility (whether for falsity, or truth) yields classical logic. 

\begin{theorem}\label{thm:ss}
Let $\X$ be a three-valued scheme. $\vDash^{ss}_{\X} \ = \ \vDash_{2}$ if and only if $\X$ is falsity-collapsible (see Fig.~\ref{falsity-collapsible}).
\end{theorem}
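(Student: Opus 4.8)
The plan is to prove the biconditional in two directions, using the established lemmas for the harder work.

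\textbf{Right-to-left direction (falsity-collapsibility suffices).} Suppose $\X$ is falsity-collapsible. First I would observe that falsity-collapsibility implies Boolean normality (the collapser $\tau_0$ fixes $0$ and $1$, so on Boolean inputs the defining equation $\tau_0(\star(\vec{x})) = \star^{\CL}(\tau_0(\vec{x}))$ forces $\star(\vec{a}) = \star^{\CL}(\vec{a})$ up to $\tau_0$, and since the right side is Boolean, $\tau_0$ acts trivially, giving $\star(\vec{a}) = \star^{\CL}(\vec{a})$). Hence by Lemma~\ref{lemma:bn} we get the inclusion $\vDash^{ss}_{\X} \subseteq \vDash_2$. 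For the converse inclusion $\vDash_2 \subseteq \vDash^{ss}_{\X}$, this should be exactly (the content of) Lemma~\ref{lemma:falsitycollapsible}, which I am told provides a lower bound for $ss$ from falsity-collapsibility. The intuitive reason is that $\tau_0$ is a homomorphism from $\X$ to the two-element Boolean algebra: given a three-valued valuation $v$ witnessing failure of $\Gamma \vDash^{ss}_{\X} \Delta$, i.e., $v(A) = 1$ for all $A \in \Gamma$ but $v(B) \neq 1$ for all $B \in \Delta$, the composite $\tau_0 \circ v$ is a two-valued valuation (a homomorphism, by falsity-collapsibility) that still sends every $A \in \Gamma$ to $1$ (since $\tau_0(1) = 1$) and every $B \in \Delta$ to $0$ (since $v(B) \in \{0, \sfrac{1}{2}\}$ and $\tau_0$ collapses both to $0$), witnessing $\Gamma \not\vDash_2 \Delta$. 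Taking the contrapositive gives $\vDash_2 \subseteq \vDash^{ss}_{\X}$.

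\textbf{Left-to-right direction (falsity-collapsibility is necessary).} Suppose $\vDash^{ss}_{\X} = \vDash_2$; I must show each operation of $\X$ is falsity-collapsible. The key is to extract, from inferential classicality, enough constraints on the truth tables to pin them down to one of the tables in Figure~\ref{falsity-collapsible}. I would proceed connective by connective using carefully chosen classically valid (and classically invalid) inferences and valuations assigning $\sfrac{1}{2}$ to atoms. For negation: the classical validity $\vDash_2 p, \neg p$ must $ss$-hold, so on a valuation with $v(p) = \sfrac{1}{2}$ we need $v(\neg p) = 1$, forcing $\neg(\sfrac{1}{2}) = 1$; and $\vDash_2 \neg\neg p, p$... wait, rather: $\neg p \vDash_2 \neg p$ together with Boolean normality (which follows, via Lemma~\ref{lemma:normal}, since $\vDash^{ss}_{\X} = \vDash_2$ forces the upper bound and Lemma~\ref{lemma:normal} says Boolean normality is necessary) pins the Boolean rows/columns. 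So $\neg(1) = 0$, $\neg(0) = 1$, $\neg(\sfrac{1}{2}) = 1$, which is exactly the falsity-collapsible negation. For conjunction and disjunction, similarly: Boolean normality fixes the corners; then for the cells involving $\sfrac{1}{2}$, I would test inferences like $p \wedge q \vDash_2 p$, $p \vDash_2 p \vee q$, $\vDash_2 p \vee \neg p$, $p \wedge \neg p \vDash_2 q$, evaluated on valuations with some atoms at $\sfrac{1}{2}$ and others at $0$ or $1$, to force each remaining cell into the ranges shown in Figure~\ref{falsity-collapsible} (e.g., $\sfrac{1}{2} \wedge 0 \in \{\sfrac{1}{2}, 0\}$ because $1 \wedge 0 = 0$ fails to hold if it were $1$, via $p \wedge q \vDash_2 q$ evaluated with $v(p)=\sfrac{1}{2}, v(q) = 0$). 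The cleanest route is probably to show directly that $\tau_0 \circ v$ must agree with the Boolean operations for every $v$, which is precisely what falsity-collapsibility asserts; the necessity of this follows because if $\tau_0(\star(\vec{x})) \neq \star^{\CL}(\tau_0(\vec{x}))$ for some inputs, one builds a valuation realizing those inputs on distinct atoms and exhibits a classical inference that fails, or a nonclassical inference that holds.

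\textbf{Main obstacle.} The genuinely non-routine part is the left-to-right direction — specifically, organizing the case analysis over the finitely many cells of the three connectives' tables so that it is clean rather than a brute enumeration, and making sure that the chosen classical (in)equalities actually isolate each cell. Much of the heavy lifting should be outsourced to Lemma~\ref{lemma:normal} (Boolean normality is necessary) and Lemma~\ref{lemma:falsitycollapsible} (falsity-collapsibility gives the lower bound); the remaining work is to show that, once Boolean normality is in place, the only way to also avoid adding spurious non-classical validities under $ss$ is to have the $\sfrac{1}{2}$-cells behave like $0$, i.e., to be falsity-collapsible. I expect the argument to mirror the structure of the corresponding impossibility proofs above (the $ss \cap tt$ theorem), reused in a constructive direction.
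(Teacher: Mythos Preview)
Your proposal has a genuine error that appears in both directions: you assume that falsity-collapsibility implies Boolean normality, and conversely that Boolean normality is necessary for $\vDash^{ss}_{\X} = \vDash_2$. Neither is true. Look again at Figure~\ref{falsity-collapsible}: the falsity-collapsible negation allows $\neg(1) \in \{\sfrac{1}{2}, 0\}$, and similarly several Boolean-input cells for $\wedge$ and $\vee$ may return $\sfrac{1}{2}$. Your derivation ``since the right side is Boolean, $\tau_0$ acts trivially'' is where the slip occurs: from $\tau_0(\star(\vec a)) = \star^{\CL}(\vec a)$ you only get $\star(\vec a) \in \tau_0^{-1}(\star^{\CL}(\vec a))$, and $\tau_0^{-1}(0) = \{0,\sfrac{1}{2}\}$.

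Concretely, this breaks your plan in two places. In the right-to-left direction, you cannot invoke Lemma~\ref{lemma:bn} for the upper bound $\vDash^{ss}_{\X} \subseteq \vDash_2$, because $\X$ need not be Boolean normal. The paper instead proves a dedicated Lemma~\ref{lemma:falsity-collapsible and ss} establishing this inclusion directly from falsity-collapsibility (essentially: any classical countervaluation is already a three-valued countervaluation, and by induction stays within the right $\tau_0$-classes). In the left-to-right direction, your appeal to Lemma~\ref{lemma:normal} fails on two counts: that lemma is stated and proved only for $st$, and more importantly its conclusion is simply false for $ss$---there exist non-Boolean-normal falsity-collapsible schemes $\X$ with $\vDash^{ss}_{\X} = \vDash_2$. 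So you cannot first secure Boolean normality and then refine. The paper's Theorem~\ref{theorem:necessary-conditions-ss} instead runs a direct case analysis keyed to the defining condition of falsity-collapsibility: for each cell, if the output is not in the $\tau_0$-preimage dictated by collapsibility, one exhibits a specific classical inference (e.g., $p,\neg p \vDash q$; $\vDash p,\neg p$; $p\wedge q \vDash p$; $p\vee q \vDash p,q$) that fails under $ss$. Your ``cleanest route'' remark at the end is actually the right instinct; it is the Boolean-normality detour that leads you astray.
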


\begin{proof}
See Theorems~\ref{theorem:sufficient-conditions-ss}~and~\ref{theorem:necessary-conditions-ss} in Appendix.
\end{proof}

\begin{theorem}\label{thm:tt}
Let $\X$ be a three-valued scheme. $\vDash^{tt}_{\X} \ = \ \vDash_{2}$ if and only if $\X$ is truth-collapsible (see Fig.~\ref{truth-collapsible}).
\end{theorem}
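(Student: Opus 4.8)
The plan is to prove Theorem \ref{thm:tt} by exploiting the duality between $tt$ and $ss$ already established in Theorem \ref{thm:ss}, rather than redoing the combinatorial work from scratch. The key observation is that the map $\delta$ on truth values given by $\delta(1)=0$, $\delta(0)=1$, $\delta(\sfrac{1}{2})=\sfrac{1}{2}$ is an order-reversing involution that swaps the designated-value sets: $v(A)=1$ iff $\delta(v(A))=0$, and $v(A)\in\{1,\sfrac{1}{2}\}$ iff $\delta(v(A))\in\{0,\sfrac{1}{2}\}$. Consequently, for any scheme $\X$ one can form a \emph{dual} scheme $\X^{\partial}$ whose operations are $f^{\partial}_{\star}(x_1,\dots,x_n)=\delta\big(f_{\star}(\delta(x_1),\dots,\delta(x_n))\big)$, and an argument $\Gamma\vDash^{tt}_{\X}\Delta$ holds if and only if the ``$\delta$-reflected'' argument holds in $\X^{\partial}$ under the $ss$ definition of validity. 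Since $\vDash_2$ is itself symmetric under this duality (classical negation is its own dual, as are $\max$/$\min$ which swap), it follows that $\vDash^{tt}_{\X}=\vDash_2$ if and only if $\vDash^{ss}_{\X^{\partial}}=\vDash_2$.

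First I would make the duality precise: define $\delta$ and $\X^{\partial}$ as above, verify that $\delta$ is an order-reversing bijection on $\{1,\sfrac{1}{2},0\}$ with $\delta\circ\delta=\mathrm{id}$, and check that $(\X^{\partial})^{\partial}=\X$. Then I would prove the translation lemma: for every valuation $v$ and formula $A$, letting $v^{\partial}$ be the valuation assigning $v^{\partial}(p)=\delta(v(p))$ to atoms, we have $v_{\X^{\partial}}(A)=\delta\big(v^{\partial}_{\X}(A)\big)$ by a routine induction on $A$ (the inductive step is exactly the defining equation of $f^{\partial}_{\star}$). From this, $v$ witnesses failure of $\Gamma\vDash^{tt}_{\X}\Delta$ — i.e.\ all premises get a value in $\{1,\sfrac{1}{2}\}$ and all conclusions get $0$ — precisely when $v^{\partial}$ witnesses failure of $\Delta\vDash^{ss}_{\X^{\partial}}\Gamma$ (note the swap of $\Gamma$ and $\Delta$, which is exactly what $\delta$-reflection does, and which is harmless since $\vDash_2$ is also closed under this swap). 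Hence $\vDash^{tt}_{\X}=\vDash_2$ iff $\vDash^{ss}_{\X^{\partial}}=\vDash_2$.

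Next I would invoke Theorem \ref{thm:ss}: $\vDash^{ss}_{\X^{\partial}}=\vDash_2$ iff $\X^{\partial}$ is falsity-collapsible. So it remains to check that $\X^{\partial}$ is falsity-collapsible if and only if $\X$ is truth-collapsible. This is an elementary computation with the collapsers: if $\tau_1$ is the truth-collapser and $\tau_0$ the falsity-collapser, then $\tau_0=\delta\circ\tau_1\circ\delta$, so conjugating the collapsibility identity $\tau_0(f^{\partial}_{\star}(\vec x))=\star^{\CL}(\tau_0(\vec x))$ by $\delta$ and using $\star^{\CL}=\delta\circ(\star')^{\CL}\circ\delta$ for the appropriate classical dual connective $\star'$ (negation self-dual; conjunction and disjunction dual to one another) turns it into $\tau_1(f_{\star}(\vec x))=(\star')^{\CL}(\tau_1(\vec x))$, i.e.\ truth-collapsibility of $\X$. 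Finally I would sanity-check the statement against the figures: applying $\X\mapsto\X^{\partial}$ to the falsity-collapsible tables in Figure \ref{falsity-collapsible} should reproduce exactly the truth-collapsible tables in Figure \ref{truth-collapsible}, which confirms the correspondence.

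I expect the main obstacle — though a mild one — to be bookkeeping the direction-reversal carefully: $tt$-failure of $\Gamma\vdash\Delta$ corresponds to $ss$-failure of $\Delta\vdash\Gamma$, not of $\Gamma\vdash\Delta$, and one must be sure that $\vDash_2$ is genuinely invariant under both the atom-level substitution $v\mapsto v^{\partial}$ and the premise/conclusion swap; fortunately classical validity $\Gamma\vDash_2\Delta$ is equivalent to $\{\neg A:A\in\Delta\}\vDash_2\{\neg B:B\in\Gamma\}$, which is exactly the needed symmetry. If one preferred to avoid duality entirely, the alternative is to mirror the Appendix proof of Theorem \ref{thm:ss} line by line — proving sufficiency via Lemma \ref{lemma:bn} (Boolean normality gives the $\subseteq$ direction) together with Lemma \ref{lemma:truth-collapsible and tt} for the $\supseteq$ direction, and necessity by showing any non-truth-collapsible Boolean normal scheme, or any non-Boolean-normal scheme (Lemma \ref{lemma:normal}), breaks some classical inference — but the duality argument is shorter and makes the parallel with Theorem \ref{thm:ss} transparent.
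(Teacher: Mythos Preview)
Your duality idea is attractive and, suitably repaired, can be made to work; but as written it has a genuine gap. From your translation lemma you correctly obtain
\[
\Gamma\vDash^{tt}_{\X}\Delta \quad\Longleftrightarrow\quad \Delta\vDash^{ss}_{\X^{\partial}}\Gamma,
\]
with the premise/conclusion swap. You then claim this yields $\vDash^{tt}_{\X}=\vDash_2$ iff $\vDash^{ss}_{\X^{\partial}}=\vDash_2$, because ``$\vDash_2$ is also closed under this swap''. It is not: $p\vDash_2 p\vee q$ holds while $p\vee q\vDash_2 p$ fails. The classical symmetry you cite, $\Gamma\vDash_2\Delta$ iff $\neg\Delta\vDash_2\neg\Gamma$, is contraposition, not a bare swap, and it does not match what your translation lemma delivers. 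Relatedly, your own check on the collapsers shows the mismatch: you end with $\tau_1(f_{\star}(\vec x))=(\star')^{\CL}(\tau_1(\vec x))$ for the \emph{dual} connective $\star'$, so $\X^{\partial}$ has its $\wedge$ behaving like classical $\vee$ and vice versa. Hence $\X^{\partial}$ is \emph{not} falsity-collapsible in the sense of Figure~\ref{falsity-collapsible}, and Theorem~\ref{thm:ss} does not apply to it as stated.

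The fix is to build the De Morgan swap into the duality. Define $\X^{\dagger}$ by $g_{\neg}=f^{\partial}_{\neg}$, $g_{\wedge}=f^{\partial}_{\vee}$, $g_{\vee}=f^{\partial}_{\wedge}$, and introduce the formula map $A\mapsto A^{*}$ that interchanges $\wedge$ and $\vee$. Then the translation lemma becomes $v_{\X^{\dagger}}(A)=\delta\big(v^{\partial}_{\X}(A^{*})\big)$, giving $\Gamma\vDash^{tt}_{\X}\Delta$ iff $\Delta^{*}\vDash^{ss}_{\X^{\dagger}}\Gamma^{*}$; since classically $\Gamma\vDash_2\Delta$ iff $\Delta^{*}\vDash_2\Gamma^{*}$, you now genuinely get $\vDash^{tt}_{\X}=\vDash_2$ iff $\vDash^{ss}_{\X^{\dagger}}=\vDash_2$, and your collapser computation shows $\X^{\dagger}$ falsity-collapsible iff $\X$ truth-collapsible. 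That completes the argument.

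Two smaller points on your fallback sketch. First, truth-collapsible schemes need not be Boolean normal (see Figure~\ref{truth-collapsible}: e.g.\ $\neg 0$ may be $\sfrac{1}{2}$), so Lemma~\ref{lemma:bn} is not available for the $\subseteq$ direction; the paper instead uses Lemma~\ref{lemma:truth-collapsible and tt} for $\vDash^{tt}_{\X}\subseteq\vDash_2$ and Lemma~\ref{lemma:truthcollapsible} for $\vDash_2\subseteq\vDash^{tt}_{\X}$ (you have these reversed). Second, Lemma~\ref{lemma:normal} concerns $st$, not $tt$; Boolean normality plays no role in the $tt$ characterization. The paper's own proof simply mirrors the $ss$ argument case by case without formalising the involution, so your (corrected) duality route is a different, more conceptual packaging of the same content.
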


\begin{proof}
See Theorems~\ref{theorem:sufficient-conditions-tt}~and~\ref{theorem:necessary-conditions-tt} in Appendix.
\end{proof}

One direction to those two results---the one stating the sufficient conditions---is not surprising, arguably: $ss$ and $tt$ are pure consequence relations, i.e. can be formulated as preservation of some set of values, usually called \textit{designated values}. In this sense, the set of designated values that characterizes $ss$-validity consists in the singleton $\{1\}$, while $tt$-validity can be characterized as preserving the values on the set $\{ 1, \sfrac{1}{2} \}$. If we think of designated values as representing truth and undesignated values as representing falsity, the results above are foreseeable. In $ss$, the intermediate value doesn't belong to the set of designated values: that is why the falsity-collapsible schema works. On the other hand, in $tt$, the intermediate value belongs to the set of designated values, and in this case, the truth-collapsible schema works. However, the other direction of these results---the one stating the necessary conditions---is more surprising in that no other schemes than those collapsible work in the intended way.

The case of $st$ is the least straightforward among the five trivalent consequence relations examined here. For this scheme we get a disjunctive characterization involving collapsibility and monotonicity as separate conditions. One way in which this may be understood is by looking at negation first: when negation is monotonic, the value $\sfrac{1}{2}$ cannot be interpreted uniformly as $1$ or $0$, and likewise for the other connectives. When negation is collapsible, then $\sfrac{1}{2}$ can be thought of as playing the role of $1$ or $0$ across connectives.

\begin{theorem}\label{theorem:st}
Let $\X$ be a three-valued scheme. $\vDash^{st}_{\X} \ = \ \vDash_{2}$ if and only if $\X$ is Boolean normal and either monotonic (see Fig.~\ref{boolean-normal-monotonic}) or collapsible (see Fig.~\ref{boolean-normal-truth-collapsible} and \ref{boolean-normal-falsity-collapsible}).
\end{theorem}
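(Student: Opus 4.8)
The plan is to prove the two directions separately, leveraging the lemmas flagged in the overview. For the ``if'' direction, suppose $\X$ is Boolean normal and either monotonic or collapsible. Boolean normality gives, via Lemma~\ref{lemma:bn}, that $\vDash^{st}_{\X}\,\subseteq\,\vDash_2$, so in both cases we only need the reverse inclusion $\vDash_2\,\subseteq\,\vDash^{st}_{\X}$. If $\X$ is monotonic, this reverse inclusion is exactly Lemma~\ref{lemma:monotonic}, and we are done. If $\X$ is collapsible, say truth-collapsible, then the idea is to transfer a would-be $st$-countermodel to a classical countermodel: given a valuation $v$ on $\{1,\sfrac12,0\}$ witnessing $\Gamma\not\vDash^{st}_{\X}\Delta$ (all premises get $1$, all conclusions get $0$), observe that $\tau_1\circ v$ is still a homomorphism into the two-element Boolean algebra because truth-collapsibility says precisely $\tau_1(\star(\vec x))=\star^{\CL}(\tau_1(\vec x))$; since $\tau_1$ fixes $1$ and $0$, the composed valuation makes all premises $1$ and all conclusions $0$, contradicting $\Gamma\vDash_2\Delta$. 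The falsity-collapsible case is symmetric using $\tau_0$ and the observation that $v(B)=0$ already forces $\tau_0(v(B))=0$ while $v(A)=1$ forces $\tau_0(v(A))=1$; here one should double-check the boundary: in $st$ the conclusion-failure condition is $v(B)=0$ (not merely $v(B)\neq 1$), so $\tau_0$ does not spoil it.

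\textbf{The ``only if'' direction.} Assume $\vDash^{st}_{\X}\,=\,\vDash_2$. By Lemma~\ref{lemma:normal}, classicality of $\vDash^{st}_{\X}$ forces $\X$ to be Boolean normal, so it remains to show that a Boolean normal $\X$ which is \emph{not} monotonic must be collapsible (and in particular its negation must be a collapser). First I would argue about negation in isolation: among the Boolean normal unary operations, $f_\neg(1)=0$ and $f_\neg(0)=1$ are fixed, so the only freedom is $f_\neg(\sfrac12)\in\{1,\sfrac12,0\}$. The value $\sfrac12$ is the monotonic choice; the values $1$ and $0$ are the truth-collapsible and falsity-collapsible choices respectively. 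So if $\X$ is not monotonic, the culprit may be negation or a binary connective; the key step is to show that a non-monotonic \emph{negation} already pins down which collapser type $\X$ must be, and that a monotonic negation paired with a non-monotonic $\wedge$ or $\vee$ breaks classicality outright. Concretely, I would use small countermodels: e.g., if $f_\neg(\sfrac12)=1$ (truth-collapsible negation) but some binary operation fails truth-collapsibility, exhibit a valuation sending the atoms to $\sfrac12$ (and $0$ where needed) that either validates a non-classical inference or invalidates a classical one, playing premise-side value $1$ against conclusion-side value $0$ as in the $ss\cap tt$ and $tt$ arguments above. The finitely many Boolean normal tables for $\wedge,\vee$ make this a bounded case analysis, and the truth-tables in Figures~\ref{boolean-normal-truth-collapsible} and \ref{boolean-normal-falsity-collapsible} tell us exactly which cells are forced.

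\textbf{Main obstacle.} The hard part will be the ``only if'' direction in the mixed case where negation is monotonic (i.e.\ $f_\neg(\sfrac12)=\sfrac12$) but a binary connective is non-monotonic, and dually where negation is a collapser but some binary connective has the ``wrong'' collapsibility or is monotonic: one must show \emph{every} such combination fails inferential classicality, not just exhibit a few. The cleanest route is probably to show that, under $st$ with a Boolean normal scheme, monotonicity of negation \emph{propagates}: using De~Morgan-style classical inferences (which must hold since $\vDash^{st}_{\X}=\vDash_2$) such as $A\wedge B\vDash_2\neg(\neg A\vee\neg B)$ and its converse, plus double-negation inferences, force the binary tables to inherit the same regime as negation. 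So the real work is establishing these ``forcing'' implications between the cell-values of $f_\neg$, $f_\wedge$, $f_\vee$ from the requirement that a handful of specific classical sequents ($p\vdash p$, excluded middle, explosion, De~Morgan, double negation, idempotence) hold under $st$ on all three-valued valuations. Once those implications are in place, the statement follows by matching against the enumerated tables in Figures~\ref{boolean-normal-monotonic}, \ref{boolean-normal-truth-collapsible}, and \ref{boolean-normal-falsity-collapsible}; I would defer the full verification to the Appendix as the authors indicate.
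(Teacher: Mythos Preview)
Your proposal is correct and tracks the paper's own argument closely: the ``if'' direction combines Lemma~\ref{lemma:bn} with Lemma~\ref{lemma:monotonic} (monotonic case) and Lemmas~\ref{lemma:truthcollapsible}/\ref{lemma:falsitycollapsible} (collapsible case, via the $\tau_\alpha$-homomorphism idea you describe), while the ``only if'' direction is exactly the case split on $f_\neg(\sfrac12)\in\{0,\sfrac12,1\}$ followed by forcing the binary cells through specific classical sequents, which is the content of Lemma~\ref{lemma:three-cases}. One small slip to fix: you have the collapsibility labels reversed---$f_\neg(\sfrac12)=0$ is the \emph{truth}-collapsible negation (since $\tau_1(\neg(\sfrac12))$ must equal $\neg^{\CL}(\tau_1(\sfrac12))=\neg^{\CL}(1)=0$) and $f_\neg(\sfrac12)=1$ is the \emph{falsity}-collapsible one, as in the paper's Figures~\ref{boolean-normal-truth-collapsible} and~\ref{boolean-normal-falsity-collapsible}.
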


\begin{proof}
See Theorems~\ref{theorem:sufficient-conditions}~and~\ref{theorem:necessary-conditions} in Appendix.
\end{proof}

\bdr{
It follows from
Theorem \ref{thm:ss}, using the $ss$-consequence relation, that there are 8192 different three-valued presentations of classical logic. Similarly, according to Theorem \ref{thm:tt}, we can obtain also 8192 different three-valued presentations of classical logic using the $tt$-consequence relation. Finally, as a consequence of Theorem \ref{theorem:st} there are 528 different three-valued presentations of classical logic, with the $st$-consequence relation.}  

In the next section, we will explore how all of these results can be connected with similar investigations about whether $ss$, $tt$, $st$ and $ts$ and $ss\cap tt$ can support the operational rules of Gentzen's proof system for classical logic.

\section{Comparisons and Perspectives}\label{sec:discussion}

The results of the previous sections tell us, given a three-valued definition of validity, exactly which three-valued truth tables for negation, conjunction, and disjunction, warrant classical inferences for the resulting logic. In subsection \ref{sec:gr}, we compare this finding to results established in \cite{chemla2019many}. In subsection \ref{sect:philosophical} we discuss some philosophical implications of our work regarding the definition of classical logic.  

\subsection{Gentzen-regular connectives}\label{sec:gr}

In \cite{chemla2019many}, a goal partly related to the one discussed here was pursued. Namely, given a three-valued definition of validity, it was asked which three-valued operators are \textit{Gentzen-regular} relative to it. Basically, a Gentzen-regular connective is a connective whose behavior can be characterized in terms of the bidirectional rules of Gentzen's ${\bf LK}$---these rules can therefore be understood as introduction and elimination sequent rules, respectively. For example, the rule whereby $\Gamma, A, B\vdash \Delta$ iff $\Gamma, A\wedge B\vdash \Delta$ corresponds to Gentzen's rule when conjunction occurs in premise position. And the rule whereby $\Gamma \vdash A\wedge B, \Delta$ iff $\Gamma \vdash A, \Delta$ and $\Gamma \vdash B, \Delta$ corresponds to Gentzen's rule for conjunction in conclusion position---see also Figure~\ref{fig:LK-rules}.
We give a more precise definition of Gentzen-regularity in Appendix~\ref{app:gr}, since the definition applies to any $n$-ary connective, beyond negation, conjunction, and disjunction. 

Clearly, when dealing with the usual two-valued semantics for classical logic, all  connectives are Gentzen-regular. However, a consequence relation can fail to be classical at the structural level, but still admit Gentzen-regular connectives. This means that unlike us here, \cite{chemla2019many} did not seek a three-valued characterization of classical logic \textit{qua} combination of operational and structural rules. Instead they focused merely on the operational side of Gentzen's proof system for classical logic.

Furthermore, \cite{chemla2019many}'s results did not seek to characterize {\it schemes} (namely sets of truth tables), but they look at connectives one by one. Consequently, \pe{their} approach is not limited to negation, conjunction and disjunction, or to a particular set of operators, but it covers arbitrary $n$-ary truth-functional operators. However, they assume the language to be constant-expressive, which means that the constants $1$, $0$ and $\sfrac{1}{2}$ are expressible by means of constant symbols.

For comparison, let us consider $st$ and $ts$ first. Under the assumption of constant expressiveness, \cite{chemla2019many} proved that $st$ and $ts$ admit a unique Gentzen-regular negation, a unique Gentzen-regular conjunction, and a unique Gentzen-regular disjunction, described by the Strong Kleene tables. In the case of $ts$, it therefore admits exactly one Gentzen-regular scheme involving negation, disjunction and conjunction. Above we saw that $ts$ admits no trivalent scheme supporting classical logic. There is no contradiction there, since Gentzen-regularity pertains only to the operational rules of a proof system for classical logic, and not to structural rules. This situation may be interpreted by saying that although $ts$ does not support classical inferences, it can support Gentzen-regular connectives that describe, in a way, classical connectives.
 
For $st$, the situation is different: \bdr{as mentioned in the previous section,} Figures \ref{boolean-normal-monotonic} and \ref{boolean-normal-truth-collapsible} and \ref{boolean-normal-falsity-collapsible} together indicate that $st$ admits 528 distinct schemes involving negation, conjunction and disjunction, supporting classical inferences, including the Weak Kleene scheme and more (512 collapsible schemes, and 16 monotone schemes). However, \cite{chemla2019many}'s result implies that $st$ admits a unique Gentzen-regular \textit{scheme}, namely the Strong Kleene one. Whence comes the difference? Here the answer concerns the assumption of constant expressiveness. In \cite{chemla2019many}, Chemla and Egr\'e left as an open issue the characterization of Gentzen-regular three-valued operators when the language does not admit constants for all truth-values. The present inventory can be seen as answering this problem for the case in which the constants are not expressible. 

These comparisons raise the more general question of what may be needed beside the Gentzen-regularity of a connective in order to guarantee that the logic be inferentially classical. The following result gives an answer to this question:

\begin{lemma}
A propositional logic $L=\langle \vdash, C\rangle $ is (inferentially) classical if and only if its connectives in $C$ are Gentzen-regular and $\vdash$ is such that for $\Gamma$ and $\Delta$ two sets of atomic propositions, $\Gamma\vdash \Delta$ iff $\Gamma \cap \Delta \neq \emptyset$.
\label{lemma:gentzen+reflexivity}
\end{lemma}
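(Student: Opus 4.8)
The plan is to prove both directions of the biconditional, treating the logic $L = \langle \vdash, C \rangle$ as presented and comparing it to the two-valued presentation $\models_2$. The forward direction is essentially a soundness-style observation: if $L$ is inferentially classical, then since in $\models_2$ every connective is Gentzen-regular and since for atomic $\Gamma, \Delta$ we have $\Gamma \models_2 \Delta$ iff $\Gamma \cap \Delta \neq \emptyset$ (an atom is true in some valuation while the others are false, unless the two sides share an atom), both conditions transfer to $\vdash$ by the assumed extensional identity $\vdash \ = \ \models_2$. So the forward direction is immediate once one unpacks the definitions.

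The substantive direction is the converse. Assume the connectives in $C$ are Gentzen-regular relative to $\vdash$ and that $\vdash$ restricted to atoms is the ``share an atom'' relation. I would show $\Gamma \vdash \Delta$ iff $\Gamma \models_2 \Delta$ by induction on the total number of connective occurrences in $\Gamma \cup \Delta$. The base case (all formulae atomic) is exactly the hypothesis on $\vdash$, matched against the characterization of $\models_2$ on atoms noted above. For the inductive step, pick a non-atomic formula in $\Gamma$ or in $\Delta$, say $\star(A_1,\dots,A_n)$ occurring on one side; Gentzen-regularity gives a bidirectional rule expressing $\Gamma \vdash \Delta$ (with this formula displayed) in terms of finitely many sequents in which $\star(A_1,\dots,A_n)$ is replaced by its immediate subformulae $A_1,\dots,A_n$ (possibly on either side, possibly with side formulae unchanged), hence strictly fewer connective occurrences. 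The same bidirectional rule holds for $\models_2$ since $\models_2$ makes every connective Gentzen-regular. Applying the induction hypothesis to each of the premise sequents, and then the rule in the $\models_2$ direction, yields $\Gamma \vdash \Delta$ iff $\Gamma \models_2 \Delta$. One should also invoke monotonicity/weakening as needed so that the side formulae $\Gamma, \Delta$ can be carried along — but this is part of what Gentzen-regularity of the connectives, together with the structural behavior forced on $\vdash$, delivers, and I would make this explicit when setting up the induction.

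The main obstacle, and the point deserving care, is making precise that Gentzen-regularity of each connective, as defined in Appendix~\ref{app:gr} for arbitrary $n$-ary connectives, genuinely furnishes a \emph{terminating} reduction: each application must strictly decrease the connective count, and the rules must be applicable regardless of the surrounding context $\Gamma, \Delta$. In particular I would need to check that the bidirectional rules are closed under arbitrary side formulae (so that the induction can peel off one connective at a time without the side formulae interfering), and that nothing in the reduction reintroduces connectives. A secondary subtlety is the treatment of formulae that occur on \emph{both} sides or multiply — handled by the set-based formulation of $\vdash$ and $\models_2$ plus reflexivity, which is exactly the atomic structural hypothesis extended through the induction. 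Once the reduction is seen to terminate at atomic sequents, the equivalence with $\models_2$ follows because $\models_2$ satisfies all the same rules and the same atomic base case, so the two relations are pinned down by identical inductive data.
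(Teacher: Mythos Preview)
Your proposal is correct and follows essentially the same route as the paper: the forward direction is immediate from the fact that $\models_2$ itself has Gentzen-regular connectives and the atom-sharing property, and the converse proceeds by induction on connective complexity, using the bidirectional Gentzen rules (which hold for both $\vdash$ and $\models_2$) to reduce any sequent to a conjunction of atomic sequents where the hypothesis applies. Your worries about side-formulae and termination are already taken care of by Definition~\ref{def:regconn}, which quantifies over arbitrary contexts $\Gamma, \Delta$ and replaces the compound formula only by its immediate subformulae; the paper additionally notes the edge case of an empty conjunction (as for $\top$), which poses no problem for the induction.
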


\begin{proof}
See Appendix \ref{app:gr}.
\end{proof}

A consequence relation like $ts$ obviously fails the structural condition expressed in this lemma, and so cannot support classical logic despite admitting Gentzen-regular connectives. On the other hand, $st$ satisfies the condition, just like $ss$ and $tt$. Finally, while $ss\cap tt$ too satisfies it, it is shown in \cite{chemla2019many} that it does not admit a Gentzen-regular negation.

More generally, we believe the above lemma could be used to answer the question we posed relative to arbitrary connectives beside negation, conjunction and disjunction, drawing on the fact that the notion of Gentzen-regularity can be defined for arbitrary finite operators as discussed in \cite{chemla2019many}.  
We leave this investigation for future work.

\subsection{Philosophical perspectives}\label{sect:philosophical}

The results of this paper show that classical logic can be obtained in a variety of ways in a three-valued setting. This raises the following question: from these various presentations of classical logic, is one of them more fundamental than the others? Besides, aren't all of them just superfluous in comparison to the the standard two-valued presentation of classical logic?

Let us consider $ss$ and $tt$ first. Relative to those systems, Theorems \ref{thm:ss} and \ref{thm:tt} establish that the collapsible schemes support exactly the classical inferences. But they are also schemes in which the middle value mirrors exactly one of the classical values. Hence, this middle value may be judged entirely redundant. We can find instances of this observation in the literature. In \cite{cantwell2008logic}, for instance, Cantwell puts forward a system of trivalent truth tables for negation, conjunction, disjunction, and a conditional operator. 
This system turns out to encapsulate exactly one of the truth-collapsible schemes of Figure \ref{truth-collapsible}, and it is called ``Non-Classical Bivalent'' by Cantwell, precisely because it yields classical logic when paired with $tt$-validity, as presented in \cite[Theorem 4.4]{cantwell2008logic}. In this regard, the interest of Cantwell's conditional operator---proposed earlier by \cite{cooper1968propositional}, see \cite{egre2021finettian} for a comparison---shows up precisely when his conjunction, disjunction and conditional are paired with Strong Kleene negation so as to yield a \textit{noncollapsible, nonclassical} system. 

More generally, the collapsible schemes can be applied a reduction technique on truth values presented as a ``grouping reduction'' in \cite[Appendix C]{chemlaegre2019suszko}, whose goal is precisely to merge truth values that play the same role in premise position and in conclusion positions of arguments. As shown there, for $ss$ and $tt$, grouping reductions basically fulfill Suszko's goal in \cite{suszko1977fregean}: they suggest that an appeal to three truth values is idle when it comes to representing classical inferences in a compositional way, and that two values are all we need.

What about $st$? It was proved that the determination of the minimum number of truth values needed to represent a reflexive, monotonic and transitive consequence relation is exactly two, but that it is three if the relation is reflexive and monotonic but nontransitive (viz. \cite{chemlaegre2019suszko}, Corollary 4.7). But as shown by \cite{barrio2015logics,cobreros2012tolerant,dicher2019st}, $st$ is not a transitive consequence relation. For $st$, therefore, we cannot argue in the same way as $ss$ and $tt$-systems that three values are idle in comparison to using just two values. Besides, as argued by \cite{cobreros2012tolerant} and by \cite{szmucferguson}, the use of a third truth value is independently motivated to represent special semantic status, such as vagueness, or absurdity, or paradoxicality. And for systems of inferences involving sentences with this third semantic status, preserving classical logic for inferences is a conservative benefit.\footnote{Notice, however, that not all the schemes that render classical logic with the $st$ consequence relation are compatible with naive non-trivial theories of truth, vagueness, paradoxicality and so on. In fact, only the monotonic ones are. To wit, consider a Liar sentence $\lambda$ and observe that there can't be a stable valuation for it in a collapsible schema where $v(\lambda) = v(\neg Tr \ulcorner \lambda \urcorner)$, where obviously $Tr$ is a naive truth predicate and $\ulcorner \lambda \urcorner$ is a quotation name for the Liar sentence.}

We can therefore answer the questions raised above as follows: for consequence relations like $ss$ and $tt$, collapsible schemes constitute roundabout ways of representing classical logic compared to the two-valued definition. In the case of $st$, the situation is more complex: while the two-valued approach to classical logic sets a benchmark for the definition of classicality all across the board, we may find different foundations for classicality at the inferential level. At this point, however, more work remains to be done to generalize the present results to more connectives, but also to many-valued logics beyond three values.\footnote{\bdr{This last point has been explored recently by Fitting in \cite{fitting2021family}, \cite{fitting2021strict}. In those papers, the author shows how to build the counterpart of some nonclassical logics using the $st$ consequence relation defined for algebras with more than three values.}} \ds{Furthermore, the present investigations are limited to the propositional case, but one may also be interested in looking at the question of which are all the three-valued presentations of classical logic when such a system is understood as first-order logic. Interestingly enough, the generalizations of the previously discussed results are not always immediate, and the issue is somehow related to the understanding of the universal and existential quantifiers as infinitary versions of conjunction and disjunction, respectively.}\footnote{\ds{For some cases, like the strong Kleene or the weak Kleene schemes, it is well-known and relatively obvious how to devise appropriate quantifiers. This is also true for some Boolean normal collapsible schemes whose operations have only classical outputs. However, both when looking at the Boolean normal monotonic, and the Boolean normal collapsible schemes, there are some (algebraically speaking) asymmetric schemes, where the same pair of inputs gives a certain output in a given order, and another output when considered in the opposite order. For instance, some Boolean normal monotonic schemes are such that $0 \wedge \sfrac{1}{2} = \sfrac{1}{2}$ although $ \sfrac{1}{2} \wedge 0 = 0$. How is one supposed to generalize this asymmetric behavior in order to conceive, e.g., an appropriately infinitary version of this conjunction? It is not obvious whether having a false instance is enough for the quantified statement to be false, or if it's also required that no instances receive the value $\sfrac{1}{2}$. These, and other similar issues, replicate in the case of the other quantifier, as they do for the Boolean normal collapsible schemes.}}

\begin{appendix}
\section{Appendix: proofs}
\subsection{Proofs common to several consequence relations}

\begin{lemma}\label{lemma:bn}
Let $\X$ be a three-valued scheme. If $\X$ is Boolean normal, then $\vDash^{\xy}_{\X} \ \subseteq \ \vDash_{2}$, with $\xy\in\{ss,tt,st\}$.
\end{lemma}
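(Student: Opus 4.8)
The plan is to show that Boolean normality of $\X$ forces every $\X$-valuation to be "close enough" to a classical valuation whenever the relevant premises/conclusions are classically-valued, so that any counterexample to classicality would yield a counterexample over $\{0,1\}$. Concretely, suppose $\Gamma \not\vDash_2 \Delta$; I want to show $\Gamma \not\vDash^{\xy}_{\X} \Delta$ for each $\xy \in \{ss, tt, st\}$. From $\Gamma \not\vDash_2 \Delta$ there is a classical (two-valued) homomorphism $v_2$ with $v_2(A)=1$ for all $A\in\Gamma$ and $v_2(B)=0$ for all $B\in\Delta$. Since $v_2$ takes values only in $\{0,1\}$, and since on inputs from $\{0,1\}$ the operations $f_\neg, f_\wedge, f_\vee$ of $\X$ agree with the classical operations $\neg^{\CL}, \wedge^{\CL}, \vee^{\CL}$ (this is exactly Boolean normality, Definition \ref{def:boolean-normality}), a straightforward induction on the structure of formulae shows that the $\X$-valuation $v$ determined by the same atomic assignment as $v_2$ satisfies $v(C) = v_2(C) \in \{0,1\}$ for every formula $C$ built from the atoms occurring in $\Gamma \cup \Delta$.

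Given this, $v$ is an $\X$-valuation with $v(A) = 1$ for all $A \in \Gamma$ and $v(B) = 0$ for all $B \in \Delta$. Now I check that such a $v$ witnesses the failure of $\Gamma \vDash^{\xy}_{\X} \Delta$ for each of the three consequence relations. For $ss$: the premises all take value $1$, but no conclusion takes value $1$ (they all take value $0$), so $ss$-validity fails. For $st$: the premises all take value $1$, but no conclusion takes a value in $\{1, \sfrac{1}{2}\}$ (they are all $0$), so $st$-validity fails. For $tt$: the premises all take a value in $\{1, \sfrac{1}{2}\}$ (in fact $1$), but no conclusion takes a value in $\{1, \sfrac{1}{2}\}$, so $tt$-validity fails. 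Hence in all three cases $\Gamma \not\vDash^{\xy}_{\X} \Delta$, which is the contrapositive of the claim $\vDash^{\xy}_{\X} \ \subseteq \ \vDash_2$.

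The only genuine content is the inductive step, and it is routine: the base case is that $v$ and $v_2$ agree on atoms by construction; for the inductive step, if $C = \star(C_1, \dots, C_n)$ with each $v(C_i) = v_2(C_i) \in \{0,1\}$ by induction hypothesis, then $v(C) = f_\star(v(C_1), \dots, v(C_n)) = \star^{\CL}(v_2(C_1), \dots, v_2(C_n)) = v_2(C)$ by Boolean normality, and this value lies in $\{0,1\}$. I do not anticipate a real obstacle here; the only thing to be careful about is that the argument uses only that the homomorphism condition and Boolean normality match up on the two-element domain, so no assumptions about the behaviour of $\X$ on $\sfrac{1}{2}$ are needed — which is exactly why the bound holds uniformly for $ss$, $tt$, and $st$.
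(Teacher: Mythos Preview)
Your argument is correct and follows essentially the same approach as the paper: prove the contrapositive by taking a classical two-valued counterexample $v$, observe that by Boolean normality the same atomic assignment induces an $\X$-valuation agreeing with $v$ on all formulae, and check that this valuation witnesses failure of each of $ss$, $tt$, $st$. Your version is in fact more explicit than the paper's, which simply asserts the induction and the conclusion without spelling out the case analysis.
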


\begin{proof}
\bdr{We need to prove that if $\Gamma \nvDash_{2} \Delta$ then $\Gamma \nvDash^{\xy}_{\mathbf{X}} \Delta$, for every $\Gamma, \Delta$. By a straightforward induction, under the assumption of Boolean normality, it is easy to show that for every classical two-valued valuation $v$, there is a three-valued $\X$ valuation $v^{*}$ such that $v=v^{*}$. Thus given the notions of $ss$-,$tt$- and $st$-consequence relations, if $v$ is a witness of $\Gamma \nvDash_{2} \Delta$, then $v^{*}$ is a witness of $\Gamma \nvDash^{\xy}_{\mathbf{X}} \Delta$.}
\end{proof}

\begin{lemma}\label{lemma:falsitycollapsible}
Let $\mathbf{X}$ be a three-valued scheme. If $\mathbf{X}$ is falsity-collapsible, then $ \vDash_{2} \ \subseteq \ \vDash^{sy}_{\mathbf{X}}$, with $y \in\{s,t\}$.
\end{lemma}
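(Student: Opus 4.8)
The statement to prove is: if $\mathbf{X}$ is falsity-collapsible, then $\vDash_2 \ \subseteq \ \vDash^{sy}_{\mathbf{X}}$ for $y \in \{s,t\}$. Here $sy$ ranges over $ss$ and $st$, both of which take the premises to be designated only when valued $1$. The key structural fact about a falsity-collapsible scheme is that the collapser $\tau_0$ (sending $\sfrac12 \mapsto 0$, fixing $0$ and $1$) is a homomorphism from the three-valued algebra onto the two-element Boolean algebra: this is exactly what $\tau_0(\star(x_1,\dots,x_n)) = \star^{\CL}(\tau_0(x_1),\dots,\tau_0(x_n))$ says for each operation $\star$ of the scheme. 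So the plan is to push any three-valued $\mathbf{X}$-valuation down to a classical valuation via $\tau_0$, use classical validity there, and pull the conclusion back up.

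Concretely, I would argue the contrapositive. Suppose $\Gamma \nvDash^{sy}_{\mathbf{X}} \Delta$; I must show $\Gamma \nvDash_2 \Delta$. By definition of $ss$- and $st$-validity (both of which have premise-standard $\{1\}$), there is a three-valued $\mathbf{X}$-valuation $v$ with $v(A) = 1$ for all $A \in \Gamma$, and such that the conclusion condition fails. For $ss$ this means $v(B) \neq 1$ for all $B \in \Delta$; for $st$ it means $v(B) = 0$ for all $B \in \Delta$. Now set $v' := \tau_0 \circ v$ on atoms and extend classically; since $\tau_0$ is a homomorphism and $\mathbf{X}$ is falsity-collapsible, an easy induction on formula complexity gives $v'(C) = \tau_0(v(C))$ for every formula $C$. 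Then for $A \in \Gamma$, $v'(A) = \tau_0(1) = 1$. For $ss$: if $v(B) \neq 1$ then $v(B) \in \{0,\sfrac12\}$, so $v'(B) = \tau_0(v(B)) = 0$; hence $v'$ witnesses $\Gamma \nvDash_2 \Delta$. For $st$: $v(B) = 0$ gives $v'(B) = 0$ directly, same conclusion. So in both cases $\Gamma \nvDash_2 \Delta$, which is what we wanted.

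The only mildly delicate point — and the closest thing to an obstacle — is checking that $\tau_0$ really is a homomorphism, i.e. unpacking the definition of collapsibility correctly and confirming it yields the inductive step $v'(C) = \tau_0(v(C))$; but this is immediate from the defining equation of $\alpha$-collapsibility with $\alpha = 0$, applied at each connective. One should also note that the argument genuinely needs the premise-standard to be $\{1\}$ (so that $\tau_0$ of a designated premise-value is again designated classically), which is why falsity-collapsibility is paired with $ss$ and $st$ and not, say, with $tt$; the analogous upper/lower-bound role of truth-collapsibility for $tt$ is handled separately (Lemma \ref{lemma:truthcollapsible}). I would present the induction compactly, since Boolean normality of collapsible schemes makes the atomic and connective cases routine.
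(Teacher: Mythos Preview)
Your argument is correct and is essentially the same as the paper's: both take a three-valued counterexample $v$, collapse it via $\tau_0$ on atoms to obtain a classical valuation $v^{*}$, and prove by induction that $v^{*}(A)=\tau_0(v(A))$ for all formulae (the paper states this as the pair of implications ``$v(A)=1\Rightarrow v^{*}(A)=1$'' and ``$v(A)\in\{0,\sfrac{1}{2}\}\Rightarrow v^{*}(A)=0$'' and checks each connective explicitly, whereas you invoke the homomorphism property of $\tau_0$ directly). One small slip: your closing remark that ``Boolean normality of collapsible schemes'' makes the induction routine is inaccurate---collapsible schemes need not be Boolean normal (cf.\ Figure~\ref{falsity-collapsible}), and it is the collapsibility identity itself, not Boolean normality, that drives the inductive step.
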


\begin{proof}
Suppose $\Gamma \nvDash^{sy}_{\mathbf{X}} \Delta$, i.e.,either there is a three-valued valuation $v$ such that $v(A)=1$ and $v(B)=0$, for every $A \in \Gamma$ and $B \in \Delta$, if $y=t$, or there is a three-valued valuation $v$ such that $v(A)=1$ and $v(B)\in\{0, \sfrac{1}{2}\}$, for every $A \in \Gamma$ and $B \in \Delta$, if $y=s$. 

Now we will show that in both cases $\Gamma \nvDash_{2} \Delta$, i.e.,that there is a classical two-valued valuation $v^{*}$ such that $v^{*}(A)=1$ and $v^{*}(B)=0$, for every $A \in \Gamma$ and $B \in \Delta$. Consider any of the cases and take $v^{*}$ to be defined as follows:
 
\[
v^{*}(p)=
\begin{cases}
0 & \text{ if } v(p)=\sfrac{1}{2}\\
v(p) & \text{ otherwise}
\end{cases}
\]

Now we show by induction on the complexity of the formula that, on the one hand, if $v(A)=1$, then $v^{*}(A)=1$ and, on the other hand, if $v(A) \in \{0, \sfrac{1}{2}\}$, then $v^{*}(A)=0$.

\underline{Base case:} If $A$ is a propositional letter, then it holds by definition of the valuation $v^{*}$.

\underline{Inductive step:} Here we need to consider three cases:

\begin{itemize}
    \item $A = \neg B$. 
    \begin{itemize}
        \item If $v(\neg B)= 1$ then $v(B)\in \{0,\sfrac{1}{2}\}$. By IH $v^{*}(B)=0$, then $v^{*}(\neg B)=1$.
        
        \item If $v(\neg B)\in\{0,\sfrac{1}{2}\}$ then $v(B)= 1$. By IH $v^{*}(B)=1$, then $v^{*}(\neg B)=0$.
    \end{itemize}

    \item $A = B \wedge C$. 
    \begin{itemize}
        \item If $v(B \wedge C)= 1$ then $v(B)=v(C)=1$. By IH $v^{*}(B)=v^{*}(C)=1$, then $v^{*}(B \wedge C)=1$.
        
        \item If $v(B \wedge C)\in \{0,\sfrac{1}{2}\}$ then $v(B)\in \{0, \sfrac{1}{2}\}$ and $v(C)\in \{0, \sfrac{1}{2}\}$. By IH $v^{*}(B)=v^{*}(C)=0$, and then $v^{*}(B \wedge C)=0$.
    \end{itemize}
    
    \item $A = B \vee C$. 
    \begin{itemize}
        \item If $v(B \vee C)= 1$ then $v(B)=1$ or  $v(C)=1$. So, depending on which of these two is the case, by IH $v^{*}(B)=1$ or $v^{*}(C)=1$, and then $v^{*}(B \vee C)=1$.
        
        \item If $v(B \vee C)\in \{0,\sfrac{1}{2}\}$ then $v(B) \in \{0. \sfrac{1}{2}\}$ and $v(C) \in \{0. \sfrac{1}{2}\}$. By IH $v^{*}(B)=v^{*}(C)=0$, then $v^{*}(B \vee C)=0$.
    \end{itemize}
    \end{itemize}
    
    This shows $v^{*}$ is a classical two-valued valuation witnessing $\Gamma \nvDash_{2} \Delta$, and therefore that $\vDash_{2} \ \subseteq \  \vDash^{st}_{\mathbf{X}}$ as desired. 
\end{proof}

\begin{lemma}\label{lemma:truthcollapsible}
Let $\mathbf{X}$ be a three-valued scheme. If $\mathbf{X}$ is truth-collapsible, then $ \vDash_{2} \ \subseteq \ \vDash^{yt}_{\mathbf{X}}$, with $y \in\{s,t\}$.
\end{lemma}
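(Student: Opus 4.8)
The statement to prove is Lemma~\ref{lemma:truthcollapsible}: if $\mathbf{X}$ is truth-collapsible, then $\vDash_2 \ \subseteq \ \vDash^{yt}_{\mathbf{X}}$ for $y \in \{s,t\}$. The plan is to mirror the proof of Lemma~\ref{lemma:falsitycollapsible} exactly, but swapping the roles of the values $0$ and $1$ (equivalently, working with the collapser $\tau_1$ in place of $\tau_0$). So I would argue contrapositively: assume $\Gamma \nvDash^{yt}_{\mathbf{X}} \Delta$, which means there is a three-valued $\mathbf{X}$-valuation $v$ with either $v(B) = 0$ for all $B \in \Delta$ and $v(A) = 1$ for all $A \in \Gamma$ (case $y = t$), or $v(B) = 0$ for all $B \in \Delta$ and $v(A) \in \{1, \sfrac{1}{2}\}$ for all $A \in \Gamma$ (case $y = s$). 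In both cases the conclusions are valued $0$ and the premises are valued non-$0$.

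Then I would define a two-valued valuation $v^*$ on propositional letters by collapsing $\sfrac{1}{2}$ upward:
\[
v^{*}(p)=
\begin{cases}
1 & \text{ if } v(p)=\sfrac{1}{2}\\
v(p) & \text{ otherwise.}
\end{cases}
\]
The core of the argument is an induction on formula complexity establishing the two-sided claim: if $v(A) = 0$ then $v^*(A) = 0$, and if $v(A) \in \{1, \sfrac{1}{2}\}$ then $v^*(A) = 1$. The base case holds by definition of $v^*$. For the inductive step I would check the three connectives, using truth-collapsibility at each node. The key observation is that truth-collapsibility of an operation $\star$ says exactly $\tau_1(\star(x_1,\dots,x_n)) = \star^{\CL}(\tau_1(x_1),\dots,\tau_1(x_n))$, i.e.\ the output is $0$ iff the classical operation applied to the $\tau_1$-images of the inputs is $0$; combined with the inductive hypothesis (which guarantees $v^*$ agrees with $\tau_1 \circ v$ on subformulae), this immediately propagates the dichotomy. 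Concretely: for $\neg B$, $v(\neg B) = 0$ iff $\neg^{\CL}(\tau_1(v(B))) = 0$ iff $\tau_1(v(B)) = 1$ iff (by IH) $v^*(B) = 1$ iff $v^*(\neg B) = 0$; and symmetrically for the non-$0$ case. Conjunction and disjunction are handled the same way, reading off from Figure~\ref{truth-collapsible} that $v(B \wedge C) = 0$ forces at least one of $v(B), v(C)$ to be $0$, and $v(B \wedge C) \in \{1,\sfrac{1}{2}\}$ forces both to be non-$0$, with the dual statements for $\vee$.

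Applying the induction to the valuation $v$ witnessing $\Gamma \nvDash^{yt}_{\mathbf{X}} \Delta$: every $B \in \Delta$ has $v(B) = 0$, so $v^*(B) = 0$; every $A \in \Gamma$ has $v(A) \in \{1, \sfrac{1}{2}\}$ (this covers both $y = s$ and $y = t$), so $v^*(A) = 1$. Hence $v^*$ is a classical two-valued valuation witnessing $\Gamma \nvDash_2 \Delta$, giving $\vDash_2 \ \subseteq \ \vDash^{yt}_{\mathbf{X}}$ as required. I do not expect any serious obstacle here: the proof is entirely parallel to Lemma~\ref{lemma:falsitycollapsible}, and the only thing to be careful about is getting the direction of the collapse right (upward, since in $yt$-validity the designated set on the conclusion side is $\{1,\sfrac{1}{2}\}$, so it is falsity — the value $0$ — that must be preserved downward to the two-valued model, forcing $\sfrac{1}{2}$ to behave like $1$). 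The mild bookkeeping point worth stating explicitly is that truth-collapsibility of the whole scheme is exactly what licenses each inductive step, just as Boolean normality alone would not suffice.
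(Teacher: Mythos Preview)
Your proposal is correct and is exactly the dual argument the paper has in mind (the paper's own proof simply reads ``similar to the previous Lemma, and so we leave it to the reader''). One cosmetic slip: you have the case labels swapped---for $yt$ with $y=s$ you get $st$, whose counterexamples have all premises valued $1$, while $y=t$ gives $tt$, whose counterexamples have premises valued in $\{1,\sfrac{1}{2}\}$---but since $\{1\}\subseteq\{1,\sfrac{1}{2}\}$ and you correctly treat both cases uniformly via the inclusion, this does not affect the argument.
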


\begin{proof}
The proof is similar to the previous Lemma, and so we leave it to the reader.
\end{proof}

\subsection{The proofs for $st$}

\begin{lemma}\label{lemma:monotonic}
Let $\mathbf{X}$ be a three-valued scheme. If $\mathbf{X}$ is monotonic, then $ \vDash_{2} \ \subseteq \ \vDash^{st}_{\mathbf{X}}$.
\end{lemma}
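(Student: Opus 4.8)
The plan is to prove the contrapositive: from a three-valued $st$-countermodel to $\Gamma \vdash \Delta$ I will manufacture a two-valued classical countermodel by ``sharpening''. So suppose $\Gamma \nvDash^{st}_{\mathbf{X}} \Delta$: there is a three-valued $\mathbf{X}$-valuation $v$ with $v(A) = 1$ for every $A \in \Gamma$ and $v(B) = 0$ for every $B \in \Delta$. Define a classical assignment on propositional letters by $v^{*}(p) = v(p)$ whenever $v(p) \in \{0,1\}$ and $v^{*}(p) = 1$ (any classical value would do) whenever $v(p) = \sfrac{1}{2}$, and let $v^{*}$ also denote the $\mathbf{X}$-valuation it induces on all formulae.

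Two observations drive the argument. First, since $v^{*}$ assigns only classical values to the atoms and (in the ambient setting of Figure~\ref{boolean-normal-monotonic}) $\mathbf{X}$ is Boolean normal, a straightforward induction on complexity --- exactly the one behind Lemma~\ref{lemma:bn} --- shows that $v^{*}$ takes classical values everywhere, and in fact coincides with the genuine two-valued classical valuation determined by its restriction to atoms. Second, by construction $v^{*}(p) \geq_{\rm I} v(p)$ for every atom $p$ (a classical value is $\leq_{\rm I}$-maximal, and $\sfrac{1}{2} \leq_{\rm I} 1$); since every operation of the monotonic scheme $\mathbf{X}$ maps $\leq_{\rm I}^{comp}$-larger tuples to $\leq_{\rm I}$-larger outputs, an easy induction on formulae upgrades this to $v^{*}(C) \geq_{\rm I} v(C)$ for every formula $C$. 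Combining the two: for $A \in \Gamma$ we have $1 = v(A) \leq_{\rm I} v^{*}(A)$, and $1$ being $\leq_{\rm I}$-maximal forces $v^{*}(A) = 1$; likewise $0 = v(B) \leq_{\rm I} v^{*}(B)$ forces $v^{*}(B) = 0$ for $B \in \Delta$. Hence $v^{*}$ is a classical valuation satisfying all of $\Gamma$ and falsifying all of $\Delta$, i.e.\ $\Gamma \nvDash_{2} \Delta$, as desired.

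The only delicate point is the interaction of the two inductions, and specifically the appeal to Boolean normality in the first observation: monotonicity alone does not suffice (e.g.\ a degenerate ``monotonic'' negation with $\neg 0 = \neg 1 = 1$ already breaks $\neg\neg p \vDash^{st}_{\mathbf{X}} p$), so what is really being used is that $\mathbf{X}$ is a \emph{Boolean normal} monotonic scheme --- which is precisely the hypothesis under which the lemma will be invoked in Theorem~\ref{theorem:st}. Beyond that, the work is routine: the monotonicity induction is one line per connective, and the sharpening step is the familiar Strong-Kleene-style precisification trick, so I expect no genuine obstacle.
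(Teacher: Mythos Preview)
Your proposal is correct and, in fact, slightly cleaner than the paper's own argument. Both proofs sharpen an $st$-countermodel $v$ to a classical valuation $v^{*}$ by replacing $\sfrac{1}{2}$-valued atoms with a classical value (the paper picks $0$, you pick $1$; as you note, either works). The difference lies in how the key preservation claim is established. The paper proceeds by a direct case analysis on the connectives, using at each step specific facts about the Boolean normal monotonic tables (e.g.\ inferring $v(B)=0$ from $v(\neg B)=1$, which tacitly uses $\neg(\sfrac{1}{2})=\sfrac{1}{2}$ and $\neg 1 = 0$). You instead factor the argument into two clean pieces: Boolean normality makes $v^{*}$ a genuine classical valuation, and monotonicity yields $v \leq_{\rm I} v^{*}$ everywhere by a one-line induction, after which $\leq_{\rm I}$-maximality of $0$ and $1$ finishes things. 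This buys you a proof that is connective-agnostic and makes the role of each hypothesis transparent. You are also right to flag that the lemma as stated (``monotonic'' alone) is not quite true---your constant-negation counterexample shows this---and that Boolean normality is implicitly being used; the paper's proof relies on it just as much as yours, even though its statement does not mention it.
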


\begin{proof}
Assume there is a inference such that  $\Gamma\nvDash^{st}_{\mathbf{X}}\Delta$. Then, there is a valuation $v$, such that $v(A)=1$ for every $A \in \Gamma$ and $v(B)=0$ for every $B \in \Delta$. Now we will show that $\Gamma \nvDash_{2} \Delta$, i.e.,that there is a classical two-valued valuation $v^{*}$ such that $v^{*}(A)=1$ and $v^{*}(B)=0$, for every $A \in \Gamma$ and $B \in \Delta$. We take $v^{*}$ to be defined as follows:
 
\[
v^{*}(p)=
\begin{cases}
0 & \text{ if } v(p)=\sfrac{1}{2}\\
v(p) & \text{ otherwise}
\end{cases}
\]

Now we show by induction on the complexity of the formula that, on the one hand, if $v(A)=1$, then $v^{*}(A)=1$ and, on the other hand, if $v(A)=0$, then $v^{*}(A)=0$.

\underline{Base case:} If $A$ is a propositional letter, then it holds by definition of the valuation $v^{*}$.

\underline{Inductive step:} Here we need to consider three cases:

\begin{itemize}
    \item $A = \neg B$. 
    \begin{itemize}
        \item If $v(\neg B)= 1$ then $v(B)=0$. By IH $v^{*}(B)=0$, then $v^{*}(\neg B)=1$.
        
        \item If $v(\neg B)= 0$ then $v(B)= 1$. By IH $v^{*}(B)=1$, then $v^{*}(\neg B)=0$.
    \end{itemize}

    \item $A = B \wedge C$. 
    \begin{itemize}
        \item If $v(B \wedge C)= 1$ then $v(B)=v(C)=1$. By IH $v^{*}(B)=v^{*}(C)=1$, then $v^{*}(B \wedge C)=1$.
        
        \item If $v(B \wedge C)= 0$ then $v(B)=0$ or $v(C)=0$. Then depending on which of the disjuncts holds, by IH $v^{*}(B)=0$ or $v^{*}(C)=0$, and then $v^{*}(B \wedge C)=0$.
    \end{itemize}
    
    \item $A = B \vee C$. 
    \begin{itemize}
        \item If $v(B \vee C)= 1$ then $v(B)=1$ or $v(C)=1$. So, depending on which of these two is the case, by IH $v^{*}(B)=1$ or $v^{*}(C)=1$, and then $v^{*}(B \vee C)=1$.
        
        \item If $v(B \vee C)= 0$ then $v(B) =0$ and $v(C)=0$. By IH $v^{*}(B)=v^{*}(C)=0$, then $v^{*}(B \vee C)=0$.
    \end{itemize}
    \end{itemize}

    This shows $v^{*}$ is a classical two-valued valuation witnessing $\Gamma \nvDash_{2}\Delta$, and therefore that $\vDash_{2} \ \subseteq \  \vDash^{st}_{\mathbf{X}}$ as desired. 
\end{proof}

\begin{theorem}
\label{theorem:sufficient-conditions}
Let $\mathbf{X}$ be a three-valued scheme. If $\mathbf{X}$ is Boolean normal monotonic, or Boolean normal collapsible, then $\vDash^{st}_{\mathbf{X}} \ = \ \vDash_{2}$.
\end{theorem}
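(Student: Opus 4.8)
The plan is to observe that this theorem is essentially a repackaging of lemmas already established, so the proof splits cleanly into the two inclusions $\vDash^{st}_{\mathbf{X}} \subseteq\, \vDash_{2}$ and $\vDash_{2}\, \subseteq\, \vDash^{st}_{\mathbf{X}}$, and then one observes that each follows from an earlier result under the stated hypotheses.

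For the inclusion $\vDash^{st}_{\mathbf{X}} \subseteq\, \vDash_{2}$, note that in both disjuncts of the hypothesis---Boolean normal monotonic and Boolean normal collapsible---the scheme $\mathbf{X}$ is in particular Boolean normal. Hence Lemma~\ref{lemma:bn}, applied with $\mathcal{C} = st$, immediately yields $\vDash^{st}_{\mathbf{X}} \subseteq\, \vDash_{2}$. This half requires no case distinction.

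For the converse inclusion I would split on which disjunct of the hypothesis holds. If $\mathbf{X}$ is monotonic, Lemma~\ref{lemma:monotonic} directly gives $\vDash_{2}\, \subseteq\, \vDash^{st}_{\mathbf{X}}$. If instead $\mathbf{X}$ is collapsible, then it is either truth-collapsible or falsity-collapsible. In the falsity-collapsible case, Lemma~\ref{lemma:falsitycollapsible} gives $\vDash_{2}\, \subseteq\, \vDash^{sy}_{\mathbf{X}}$ for $y\in\{s,t\}$, and specializing $y=t$ gives exactly $\vDash_{2}\, \subseteq\, \vDash^{st}_{\mathbf{X}}$. In the truth-collapsible case, Lemma~\ref{lemma:truthcollapsible} gives $\vDash_{2}\, \subseteq\, \vDash^{yt}_{\mathbf{X}}$ for $y\in\{s,t\}$, and specializing $y=s$ gives $\vDash_{2}\, \subseteq\, \vDash^{st}_{\mathbf{X}}$. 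Combining the two inclusions in each case yields $\vDash^{st}_{\mathbf{X}} =\, \vDash_{2}$.

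Since all the real work has been front-loaded into the lemmas, there is no genuine obstacle in the theorem itself. The only points requiring care are (i) verifying that Boolean normality is present in \emph{both} disjuncts, so that Lemma~\ref{lemma:bn} applies uniformly and we do not mistakenly think collapsibility alone suffices for the upper bound, and (ii) instantiating the parameter $y$ in Lemmas~\ref{lemma:falsitycollapsible} and~\ref{lemma:truthcollapsible} correctly ($y=t$ for falsity-collapsible, $y=s$ for truth-collapsible) so that the resulting consequence relation is precisely $st$ and not $ss$ or $tt$. The substantive content---the inductions on formula complexity that build a classical valuation $v^{*}$ from a three-valued $st$-witness, and the simulation of classical valuations by three-valued ones---lives entirely in the proofs of Lemmas~\ref{lemma:bn}, \ref{lemma:monotonic}, \ref{lemma:falsitycollapsible}, and~\ref{lemma:truthcollapsible}.
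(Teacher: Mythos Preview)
Your proposal is correct and follows exactly the paper's approach: the paper's own proof simply reads ``From Lemmas \ref{lemma:bn}, \ref{lemma:monotonic}, \ref{lemma:falsitycollapsible} and \ref{lemma:truthcollapsible},'' and your write-up is just an explicit unpacking of how those four lemmas combine to yield the two inclusions. Your care about Boolean normality being present in both disjuncts and about instantiating the parameter $y$ correctly is well placed, but there is no divergence from the paper.
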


\begin{proof}
From Lemmas \ref{lemma:bn}, \ref{lemma:monotonic}, \ref{lemma:falsitycollapsible} and \ref{lemma:truthcollapsible}.
\end{proof}

Up until now we proved that certain three-valued schemes---belonging in particular into the class of normal Boolean monotonic, or normal Boolean collapsible schemes---render classical logic when equipped with the $st$ definition of logical consequence. If possible, we also would like to prove the converse. That is to say, that if a three-valued scheme renders classical logic when equipped with the $st$ definition of logical consequence, then said scheme belongs in one and only one of the two classes described before. Below, we show this to be the case. However, to prove this we need both some definitions and some important lemmata, that will do all the heavy-lifting for us.

\begin{lemma}\label{lemma:normal}
Let $\mathbf{X}$ be a three-valued scheme. If $\mathbf{X}$ is not Boolean normal, then $\vDash^{st}_{\mathbf{X}} \ \nsubseteq \ \vDash_{2}$.
\end{lemma}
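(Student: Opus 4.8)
The plan is to extract, from the failure of Boolean normality, a single inference that is valid in $\vDash^{st}_{\mathbf{X}}$ but invalid in $\vDash_{2}$. Since $\mathbf{X}$ is not Boolean normal, one of its operations disagrees with its classical analogue on Boolean inputs, so I would fix a connective $\star\in\{\neg,\wedge,\vee\}$ and a tuple $\vec a$ of values in $\{0,1\}$ with $f_\star(\vec a)\neq\star^{\CL}(\vec a)$; let $C$ be $\star$ applied to fresh distinct atoms $p_1,\dots,p_k$ ($k=1$ for negation, $k=2$ otherwise), let $a$ be the atomic assignment $p_i\mapsto a_i$, and write $b:=\star^{\CL}(\vec a)\in\{0,1\}$ and $c:=f_\star(\vec a)\in\{1,\sfrac{1}{2},0\}\setminus\{b\}$. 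Split the atoms into $S_1:=\{p_i: a_i=1\}$ and $S_0:=\{p_i: a_i=0\}$. The crucial remark is that the only $\mathbf{X}$-valuation giving value $1$ to every member of $S_1$ and value $0$ to every member of $S_0$ assigns $C$ the value $c$, by compositionality; any other valuation either drops some member of $S_1$ below $1$ or lifts some member of $S_0$ to a value in $\{1,\sfrac{1}{2}\}$. Including all the atoms of $C$ in the inference is exactly what pins the offending table cell down to this single valuation, which is why a naive inference that tries to exploit only the single bad cell need not work.

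I would then distinguish two cases according to $b$, using the asymmetry of $st$ (premises must be valued $1$, conclusions may be valued in $\{1,\sfrac{1}{2}\}$). If $b=0$, so $c\in\{1,\sfrac{1}{2}\}$, I claim $S_1\vDash^{st}_{\mathbf{X}}C,S_0$: for any $v$ with $v(p)=1$ for all $p\in S_1$, either some $q\in S_0$ already has $v(q)\in\{1,\sfrac{1}{2}\}$ and is a designated conclusion, or all of $S_0$ is valued $0$, whence the atoms of $C$ are valued by $a$ and $v(C)=c$ is a designated conclusion. If $b=1$, so $c\in\{0,\sfrac{1}{2}\}$, I claim $S_1,C\vDash^{st}_{\mathbf{X}}S_0$: for any $v$ valuing all of $S_1\cup\{C\}$ by $1$, it cannot happen that all of $S_0$ is valued $0$, since then the atoms of $C$ would be valued by $a$ and $v(C)$ would be $c\neq 1$, contradicting $v(C)=1$; hence some $q\in S_0$ has $v(q)\in\{1,\sfrac{1}{2}\}$ and the conclusion is designated.

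Finally, the classical valuation extending $a$ falsifies the chosen inference: it makes every member of $S_1$ true, every member of $S_0$ false, and $C$ true or false according to $b$, which witnesses $S_1\nvDash_{2}C,S_0$ when $b=0$ and $S_1,C\nvDash_{2}S_0$ when $b=1$. In either case $\vDash^{st}_{\mathbf{X}}\nsubseteq\vDash_{2}$. I expect the only point needing care to be the degenerate situations in which $S_1$ or $S_0$ is empty — for instance when the offending cell of $f_\wedge$ is $(1,1)$ or $(0,0)$, so that the inference has an empty conclusion-set or an empty premise-set — but the forcing remark above still applies verbatim, so these cases cause no trouble.
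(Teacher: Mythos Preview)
Your proof is correct and takes a genuinely different route from the paper's. The paper proceeds sequentially: it first argues that negation must be Boolean normal (using inferences like $p\vDash^{st}_{\mathbf X}\neg p$ when $\neg(1)\in\{1,\sfrac12\}$), and only then, \emph{relying on the already-established Boolean normality of negation}, it manufactures formulae such as $\neg p\vee\neg p$ or $p\wedge\neg p$ to target specific Boolean cells of $\vee$ and $\wedge$. Your argument is uniform and self-contained: you exploit the multiple-premise / multiple-conclusion format of sequents directly, putting the atoms that must be $1$ on the left (where the $s$-condition pins them to $1$) and the atoms that must be $0$ on the right (where failure to be $0$ already discharges the $t$-conclusion), with $C$ placed according to its classical value. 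This makes negation unnecessary as an auxiliary device, works verbatim for any $n$-ary connective, and would go through even in a language without negation. The paper's approach, by contrast, buys concreteness (each case is an explicit named inference) at the cost of modularity: its treatment of $\wedge$ and $\vee$ is not independent of the treatment of $\neg$.
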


\begin{proof}
Suppose $\mathbf{X}$ is not Boolean normal, then some operation behaves in a way such that some classically invalid inferences are valid in $\mathbf{X}$. 

\begin{enumerate}
\item Let's start with negation. If $\X$ is such that $\neg(1) \in\{\sfrac{1}{2}, 1\}$ then  $ p \vDash^{st}_{\mathbf{X}}\neg p$. On the other hand if $\neg(0)\in\{\sfrac{1}{2}, 0\}$, then $ \neg p \vDash^{st}_{\mathbf{X}} p$.

\item So, having proved that negation must be Boolean normal, if it is $\vee$ which is not Boolean normal, then $ \neg p \vee \neg p \vDash^{st}_{\mathbf{X}} p$, or $\neg p \vee p \vDash^{st}_{\mathbf{X}}  p$, or $ p \vDash^{st}_{\mathbf{X}} \neg p \vee \neg p$

\item Again, knowing that negation is Boolean normal, if it is $\wedge$ which is not Boolean normal, then $ p \vDash^{st}_{\mathbf{X}} p \wedge \neg p$, or $ \neg p \wedge \neg p \vDash^{st}_{\mathbf{X}} p$, or $p \vDash^{st}_{\mathbf{X}}  \neg p \wedge \neg p$
\end{enumerate}

But none of these are valid in classical logic, whence $\vDash^{st}_{\mathbf{X}} \ \nsubseteq \ \vDash_{2}$.
\end{proof}

From this Lemma, since the classical values are determined, we can conclude that there are in principle at most three possible negations to consider: $\neg(\sfrac{1}{2})\in\{0, \sfrac{1}{2}, 1\}$. And actually, what we will prove next is that each of these negations selects exactly the truth tables we have proved are enough to obtain classical logic. In other words, we will prove the following:\\

\begin{lemma}\label{lemma:three-cases}
Let $\X$ be a three-valued scheme. If $\vDash^{st}_{\mathbf{X}} \ = \ \vDash_{2}$ we have three cases:

\begin{description}
    \item[(1)] If $\neg(\sfrac{1}{2})=\sfrac{1}{2}$ then conjunction and disjunction are Boolean normal monotonic (the operations on Fig.~\ref{boolean-normal-monotonic}).
    \item[(2)] If $\neg(\sfrac{1}{2})=0$ then conjunction and disjunction are operations of a Boolean normal truth-collapsible scheme (the operations on Fig.~\ref{boolean-normal-truth-collapsible}).
    \item[(3)] If $\neg(\sfrac{1}{2})=1$ then conjunction and disjunction are operations of a Boolean normal falsity-collapsible scheme (the operations on Fig.~\ref{boolean-normal-falsity-collapsible}).
\end{description}

\end{lemma}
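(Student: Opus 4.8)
The plan is to start from what is already available. By Lemma~\ref{lemma:normal}, the hypothesis $\vDash^{st}_{\mathbf{X}} = \vDash_2$ forces $\mathbf{X}$ to be Boolean normal; in particular $\neg(1)=0$, $\neg(0)=1$, and the four ``corner'' cells of $\wedge$ and of $\vee$ already take their classical values, so the only remaining parameter of negation is $\neg(\sfrac{1}{2}) \in \{\sfrac{1}{2}, 0, 1\}$, which is exactly the case split of the statement. Moreover, by Lemma~\ref{lemma:bn} the inclusion $\vDash^{st}_{\mathbf{X}} \subseteq \vDash_2$ is automatic once $\mathbf{X}$ is Boolean normal, so the hypothesis reduces to the single inclusion $\vDash_2 \subseteq \vDash^{st}_{\mathbf{X}}$: \emph{every classically valid sequent must be $st$-valid over $\mathbf{X}$}. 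It therefore only remains, in each of the three cases, to show that the five cells of $\wedge$ and the five cells of $\vee$ that involve $\sfrac{1}{2}$ among their inputs are confined to the options displayed respectively in Figure~\ref{boolean-normal-monotonic}, Figure~\ref{boolean-normal-truth-collapsible}, or Figure~\ref{boolean-normal-falsity-collapsible}.

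For each such cell the method is uniform: exhibit one or two \emph{classically valid} sequents $\Gamma \vDash_2 \Delta$ together with a valuation $v$ that sets one or two atoms to $\sfrac{1}{2}$ and every other atom to $0$ or $1$, chosen so that all formulae in $\Gamma$ get value $1$ and all formulae in $\Delta$ get value $0$ \emph{unless} the cell under scrutiny avoids a particular value; by the reduction above, this excludes that value. The sequents involved come from a small toolkit: \emph{explosion} $p \wedge \neg p \vDash_2 q$ and \emph{excluded middle} $q \vDash_2 p \vee \neg p$ for the diagonal cells $\sfrac{1}{2} \wedge \sfrac{1}{2}$ and $\sfrac{1}{2} \vee \sfrac{1}{2}$; the distribution equivalence $p \equiv (p \wedge q) \vee (p \wedge \neg q)$, the monotonicity-type sequent $p \wedge q \vDash_2 \neg(p \wedge \neg q)$, the negation sequent $\neg q \vDash_2 \neg(p \wedge q)$, and weakened tautologies such as $\neg p \vee (p \vee q)$, for the cells adjacent to a corner; and, when these short sequents do not suffice, resolution-type sequents such as $p \vee q,\ \neg p \vee r,\ \neg q \vee s \vDash_2 r \vee s$. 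De Morgan's laws and commutativity are \emph{not} presupposed but are themselves recovered as instances of this procedure, which is what lets some $\vee$-cells be treated via the corresponding $\wedge$-cells and conversely.

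The three cases differ chiefly in how easily a formula can be forced to value $1$ at an atom valued $\sfrac{1}{2}$. In case~(2), $\neg(\sfrac{1}{2})=0$, so $v(p)=\sfrac{1}{2}$ yields $v(\neg\neg p)=1$; in case~(3), $\neg(\sfrac{1}{2})=1$, so already $v(\neg p)=1$; in either case one has witnesses that are valued $1$ at a $\sfrac{1}{2}$-atom, and short sequents built from $\neg$, $\neg\neg$, $\wedge$ and $\vee$ pin down every cell (for instance $\neg q \vDash_2 \neg(p \wedge q)$ at $v(p)=\sfrac{1}{2}$, $v(q)=0$ forces $\sfrac{1}{2} \wedge 0 = 0$ in case~(2), since there $\neg$ sends both $1$ and $\sfrac{1}{2}$ to $0$, while $p,\ \neg\neg q \vDash_2 p \wedge q$ forces $1 \wedge \sfrac{1}{2} \neq 0$). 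Case~(1), $\neg(\sfrac{1}{2})=\sfrac{1}{2}$, is the delicate one, because every formula in a single atom valued $\sfrac{1}{2}$ is itself valued $\sfrac{1}{2}$, so no one-atom witness for the value $1$ exists; here one is forced to use two-atom valuations together with the weakened-tautology and resolution sequents above (e.g.\ $\neg p \vee (p \vee q)$ at $v(p)=\sfrac{1}{2}$, $v(q)=0$ gives $\sfrac{1}{2} \vee 0 \neq 0$, while the resolution sequent at $v(p)=\sfrac{1}{2}$, $v(q)=v(r)=v(s)=0$ gives $\sfrac{1}{2} \vee 0 \neq 1$, so $\sfrac{1}{2} \vee 0 = \sfrac{1}{2}$; and $p \wedge q \vDash_2 \neg(p \wedge \neg q)$ together with $p \vDash_2 (p \wedge q)\vee(p \wedge \neg q)$ pins $1 \wedge \sfrac{1}{2} = \sfrac{1}{2}$).

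The main obstacle I anticipate is the bookkeeping rather than any single hard idea: three cases, two operations, five cells each, with up to two values to exclude per cell and no symmetry available for free, so each exclusion needs its own tailored pair of a classically valid sequent and a supporting valuation. The order of treatment also matters, since the sequent exposing a diagonal cell typically also involves the adjacent cells, so those must be settled first. Finally, one should check that the constraints thus obtained are exactly those of the figures and not stronger ones; but this is immediate, since Theorem~\ref{theorem:sufficient-conditions} already establishes that every scheme within those figures is $st$-classical, so only the present ``necessity'' direction is genuinely at issue.
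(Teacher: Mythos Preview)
Your plan is correct and follows essentially the same route as the paper: invoke Lemma~\ref{lemma:normal} for Boolean normality, case-split on $\neg(\sfrac{1}{2})$, and for each undetermined cell exhibit a classically valid sequent together with a valuation that would produce an $st$-counterexample if the cell took a forbidden value. The differences are only in the choice of witnesses and in the order of treatment. The paper handles the \emph{diagonals first} (e.g.\ $\sfrac{1}{2}\wedge\sfrac{1}{2}=\sfrac{1}{2}$ via $p\wedge\neg p\vDash q$ and $\neg(\neg p\wedge\neg p)\vDash p\wedge p$), and then uses the diagonal value to build single-premise, single-variable-compound witnesses for the adjacent cells (e.g.\ $(p\wedge\neg p)\wedge q\vDash \neg q$ and $p\vDash \neg(q\wedge\neg q)\wedge p$ for $\sfrac{1}{2}\wedge 1$). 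Your sketch runs the dependency the other way, settling adjacent cells independently via multi-premise resolution-type sequents and distribution, then returning to the diagonals. Both orders work; the paper's choice keeps every witness sequent to at most one premise and two atoms, whereas your resolution witness for $\sfrac{1}{2}\vee 0\neq 1$ needs three premises and four atoms. One small point: explosion and excluded middle alone give only half of each diagonal constraint in case~(1); you still need De~Morgan-type sequents (as the paper uses $p\vee p\vDash \neg(\neg p\vee\neg p)$ and $\neg(\neg p\wedge\neg p)\vDash p\wedge p$) to rule out the remaining value, which your proposal acknowledges but does not spell out.
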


\begin{proof}
By Lemma \ref{lemma:normal} we assume Boolean normality. We will prove cases (1) and (2), since (3) is similar.

\begin{description}
\item[Case (1)] Assume then that $\neg(\sfrac{1}{2})=\sfrac{1}{2}$. We will show that the other operations are monotonic. 

\begin{itemize}
    \item The case of the conjunction:

\begin{itemize}
    \item First we show that in every $\X$, $(\sfrac{1}{2}\wedge \sfrac{1}{2})=\sfrac{1}{2}$.
    
    \begin{itemize}
        \item Assume on the contrary that $(\sfrac{1}{2}\wedge \sfrac{1}{2})=1$. Then we would have a counterexample to the following classically valid inference: $ p \wedge \neg p\nvDash^{st}_{\mathbf{X}} q$ ($v(q)=0$, $v(p)=\sfrac{1}{2}$).

       \item Assume now that $(\sfrac{1}{2}\wedge \sfrac{1}{2})=0$. Then we would have a counterexample to the following classically valid inference: $\neg (\neg p \wedge \neg p) \nvDash^{st}_{\mathbf{X}}  p \wedge p$ ($v(p)=\sfrac{1}{2}$).
    \end{itemize}
    
    \item Now, having proved the previous case, we show that $(\sfrac{1}{2}\wedge 1)=\sfrac{1}{2}$ (we leave to the reader the case $(1\wedge \sfrac{1}{2})=\sfrac{1}{2}$). 
    \begin{itemize}
        \item Assume on the contrary that $(\sfrac{1}{2}\wedge 1)=0$. Then we would have a counterexample to the following classically valid inference: $ p \nvDash^{st}_{\mathbf{X}} \neg (q \wedge \neg q)\wedge p $ ($v(p)=1$, $v(q)=\sfrac{1}{2}$). 
    
       \item Assume now that $(\sfrac{1}{2}\wedge 1)=1$. Then we would have a counterexample to the following classically valid inference: $ (p \wedge \neg p) \wedge q \nvDash^{st}_{\mathbf{X}} \neg q$ ($v(p)=\sfrac{1}{2}$, $v(q)=1$).
    \end{itemize}

\item We show now that $(\sfrac{1}{2}\wedge 0)\neq 1$ (we left to the reader the case $(0\wedge\sfrac{1}{2})\neq 1$). If it were the case that $(\sfrac{1}{2}\wedge 0)= 1$ then we would have a counterexample to the following classically valid inference: $  p \wedge q \nvDash^{st}_{\mathbf{X}} q$ ($v(p)=\sfrac{1}{2}$, $v(q)=0$).
    
\end{itemize}

\item The case of the disjunction:

\begin{itemize}
    \item First we show that in every $\X$, $(\sfrac{1}{2}\vee \sfrac{1}{2})=\sfrac{1}{2}$.
    
    \begin{itemize}
        \item Assume on the contrary that $(\sfrac{1}{2}\vee \sfrac{1}{2})=1$. Then we would have a counterexample to the following classically valid inference: $ p \vee p\nvDash^{st}_{\mathbf{X}} \neg(\neg p \vee \neg p) $ ($v(p)=\sfrac{1}{2}$). 
    
       \item Assume now that $(\sfrac{1}{2}\vee \sfrac{1}{2})=0$. Then we would have a counterexample to the following classically valid inference: $\nvDash^{st}_{\mathbf{X}}p \vee \neg p$ ($v(p)=\sfrac{1}{2}$).
    \end{itemize}
    
    \item Now, having proved the previous case, we show that $(\sfrac{1}{2}\vee 1)\neq 0$ (we left to the reader the case $(1\vee \sfrac{1}{2})\neq\sfrac{1}{2})$). If it were the case that $(\sfrac{1}{2}\vee 1)= 0$ then we would have a counterexample to the following classically valid inference: $q \nvDash^{st}_{\mathbf{X}}  p \vee q$ ($v(p)=\sfrac{1}{2}$, $v(q)=1$).
    
    \item We show now that $(\sfrac{1}{2}\vee 0)=\sfrac{1}{2}$ (we left to the reader the case $(0\vee\sfrac{1}{2})=\sfrac{1}{2}$). 
    
     \begin{itemize}
        \item Assume on the contrary that $(\sfrac{1}{2}\vee 0)=1$. Then we would have a counterexample to the following classically valid inference: $\neg (p \vee \neg p) \vee q \nvDash^{st}_{\mathbf{X}}  q $ ($v(p)=\sfrac{1}{2}$, $v(q)=0$). 
    
       \item Assume now that $(\sfrac{1}{2}\vee 0)=0$. Then we would have a counterexample to the following classically valid inference: $ \nvDash^{st}_{\mathbf{X}}  (p \vee \neg p) \vee q$ ($v(p)=\sfrac{1}{2}$, $v(q)=0$).
    \end{itemize}

\end{itemize}

\end{itemize}
\item[Case (2)] Assume now that $\neg(\sfrac{1}{2})=0$. We will show that the other operations belong to some of the truth-collapsible schemes. 

\begin{itemize}
    \item The case of the conjunction:

\begin{itemize}
    \item First we show that $(1\wedge \sfrac{1}{2})\neq 0$ (we left to the reader the case $(\sfrac{1}{2}\wedge 1)\neq 0$). Assume on the contrary that $(1\wedge \sfrac{1}{2})=0$. Then we would have a counterexample to the following classically valid inference: $\neg (p \wedge q), p \nvDash^{st}_{\mathbf{X}} \neg q$ ($v(p)=1$, $v(q)=\sfrac{1}{2}$). 
    
    \item Now, we show that $(\sfrac{1}{2}\wedge \sfrac{1}{2})\neq 0$. Assume on the contrary that $(\sfrac{1}{2}\wedge \sfrac{1}{2})=0$. Then we would have a counterexample to the following classically valid inference: $ \neg \neg p \nvDash^{st}_{\mathbf{X}} p \wedge p$ ($v(p)=\sfrac{1}{2}$).

\item We show now that $(\sfrac{1}{2}\wedge 0)= 0$ (we left to the reader the case $(0\wedge\sfrac{1}{2})= 0$). 

     \begin{itemize}
        \item Assume on the contrary that $(\sfrac{1}{2}\wedge 0)=1$. Then we would have a counterexample to the following classically valid inference: $ p \wedge \neg p \nvDash^{st}_{\mathbf{X}} q $ ($v(p)=\sfrac{1}{2}$, $v(q)=0$). 
    
       \item Assume now that $(\sfrac{1}{2}\wedge 0)=\sfrac{1}{2}$. Then we would have a counterexample to the following classically valid inference: $\nvDash^{st}_{\mathbf{X}} \neg (p \wedge \neg p)$ ($v(p)=\sfrac{1}{2}$).
    \end{itemize}
\end{itemize}

\item The case of the disjunction:

\begin{itemize}
    \item First we show that $(1\vee \sfrac{1}{2})\neq 0$ (we left to the reader the case $(\sfrac{1}{2}\vee 1)\neq 0$). Assume on the contrary that $(1\vee \sfrac{1}{2})=0$. Then we would have a counterexample to the following classically valid inference: $p \nvDash^{st}_{\mathbf{X}} p \vee q$ ($v(p)=1$, $v(q)=\sfrac{1}{2}$). 
    
    \item Now, we show that $(\sfrac{1}{2}\vee \sfrac{1}{2})\neq 0$. Assume on the contrary that $(\sfrac{1}{2}\vee \sfrac{1}{2})=0$. Then we would have a counterexample to the following classically valid inference: $\neg \neg p \nvDash^{st}_{\mathbf{X}} p \vee p$ ($v(p)=\sfrac{1}{2}$).

\item We show now that $(\sfrac{1}{2}\vee 0)\neq 0$ (we left to the reader the case $(0\vee\sfrac{1}{2})\neq 0$). Assume on the contrary that $(\sfrac{1}{2}\vee 0)=0$. Then we would have a counterexample to the following classically valid inference: $\nvDash^{st}_{\mathbf{X}}  p \vee \neg p$ ($v(p)=\sfrac{1}{2}$). 

\end{itemize}

\end{itemize}

\item[Case (3)] Similar to the Case (2), so we leave it to the reader.\qedhere
\end{description}
\end{proof}

\begin{theorem}
\label{theorem:necessary-conditions}
Let $\mathbf{X}$ be a three-valued scheme. If $\vDash^{st}_{\mathbf{X}} \ = \ \vDash_{2}$, then $\mathbf{X}$ is Boolean normal, and either monotonic, or collapsible.

\end{theorem}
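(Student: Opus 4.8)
The plan is to obtain the statement as an essentially immediate corollary of Lemma~\ref{lemma:normal} and Lemma~\ref{lemma:three-cases}, which have already done the genuine work. First I would note that $\vDash^{st}_{\mathbf{X}} = \vDash_{2}$ entails in particular $\vDash^{st}_{\mathbf{X}} \subseteq \vDash_{2}$, so the contrapositive of Lemma~\ref{lemma:normal} forces $\mathbf{X}$ to be Boolean normal. Boolean normality then pins down the classical cells of the negation table, $\neg(1) = 0$ and $\neg(0) = 1$, leaving $\neg(\sfrac{1}{2}) \in \{0, \sfrac{1}{2}, 1\}$ as the only free choice; so these three possibilities are exhaustive and I would split on them.

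In the case $\neg(\sfrac{1}{2}) = \sfrac{1}{2}$, the negation table is precisely the (unique) Boolean normal monotonic negation displayed in Figure~\ref{boolean-normal-monotonic}, and Lemma~\ref{lemma:three-cases}(1) tells us that $\wedge$ and $\vee$ are Boolean normal monotonic as well; hence the whole scheme $\mathbf{X}$ is Boolean normal monotonic, in particular monotonic. In the case $\neg(\sfrac{1}{2}) = 0$, the negation table is precisely the Boolean normal truth-collapsible negation of Figure~\ref{boolean-normal-truth-collapsible} --- one checks directly that $\tau_1(\neg(x)) = \neg^{\CL}(\tau_1(x))$ for all $x$ --- and Lemma~\ref{lemma:three-cases}(2) gives that $\wedge$ and $\vee$ belong to a Boolean normal truth-collapsible scheme; hence $\mathbf{X}$ is Boolean normal truth-collapsible, in particular collapsible. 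The case $\neg(\sfrac{1}{2}) = 1$ is handled symmetrically, invoking Lemma~\ref{lemma:three-cases}(3) and the observation that this negation satisfies $\tau_0(\neg(x)) = \neg^{\CL}(\tau_0(x))$, so $\mathbf{X}$ is Boolean normal falsity-collapsible, in particular collapsible.

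Collecting the three cases: $\mathbf{X}$ is Boolean normal, and it is either monotonic (first case) or collapsible (second and third cases), which is exactly the claim.

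As for where the difficulty lies: there is no real obstacle at this stage, since Lemma~\ref{lemma:three-cases} already absorbs the extended case analysis over classically valid inferences that rules out the unwanted middle-cell values of $\wedge$ and $\vee$. The one point that requires a moment's care is that Lemma~\ref{lemma:three-cases} constrains only $\wedge$ and $\vee$; I would therefore make explicit that the negation of $\mathbf{X}$ automatically has the matching profile in each case, which is immediate because Boolean normality fixes its classical outputs and the value $\neg(\sfrac{1}{2})$ is itself the case parameter. Thus the proof is a short bookkeeping argument assembling the preceding lemmas.
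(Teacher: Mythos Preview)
Your proposal is correct and follows essentially the same approach as the paper, which simply states that the result is immediate from Lemmas~\ref{lemma:normal} and~\ref{lemma:three-cases}. Your explicit remark that the negation itself automatically has the matching monotonic or collapsible profile in each of the three cases is a helpful clarification, but it does not depart from the paper's argument.
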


\begin{proof}
Immediate from Lemmas \ref{lemma:normal} and \ref{lemma:three-cases}.
\end{proof}

\subsection{The proofs for $ss$}

\begin{lemma}\label{lemma:falsity-collapsible and ss}
Let $\mathbf{X}$ be a three-valued scheme. If $\mathbf{X}$ is falsity-collapsible, then $\vDash^{ss}_{\mathbf{X}} \ \subseteq \ \vDash_{2}$.
\end{lemma}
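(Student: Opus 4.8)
The plan is to show the contrapositive: if $\Gamma \nvDash_2 \Delta$, then $\Gamma \nvDash^{ss}_{\mathbf X} \Delta$. Since $\mathbf X$ is Boolean normal (every falsity-collapsible scheme has Boolean normal operations restricted to $\{0,1\}$, and in fact $\mathbf X$ being falsity-collapsible is enough), we can lift a classical countermodel to a three-valued one. Concretely, given a classical valuation $v_2$ with $v_2(A)=1$ for all $A\in\Gamma$ and $v_2(B)=0$ for all $B\in\Delta$, define the three-valued valuation $v$ on propositional letters by $v(p)=v_2(p)\in\{0,1\}$. The claim is that $v$ then agrees with $v_2$ on \emph{all} formulae, hence $v(A)=1$ for all $A\in\Gamma$ and $v(B)=0\neq 1$ for all $B\in\Delta$, witnessing $\Gamma\nvDash^{ss}_{\mathbf X}\Delta$. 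This is just Lemma~\ref{lemma:bn} specialized to $\xy=ss$, so strictly speaking the lemma follows immediately from Boolean normality, which falsity-collapsibility implies.

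First I would observe that a falsity-collapsible scheme is Boolean normal: by the collapsibility condition $\tau_0(\star(x_1,\dots,x_n))=\star^{\CL}(\tau_0(x_1),\dots,\tau_0(x_n))$, when all $x_i\in\{0,1\}$ we have $\tau_0(x_i)=x_i$ and $\tau_0$ is the identity on the image, so $\star(x_1,\dots,x_n)=\star^{\CL}(x_1,\dots,x_n)$. Then I would cite Lemma~\ref{lemma:bn} directly: since $\mathbf X$ is Boolean normal, $\vDash^{ss}_{\mathbf X}\subseteq\vDash_2$. Alternatively, to keep the proof self-contained, I would run the easy induction: for a classical $v_2$ extended to $v$ on atoms, show $v(A)=v_2(A)$ for every $A$ by induction on complexity, using Boolean normality at each connective step (the inputs are always Boolean by IH, so the output matches the classical table).

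The induction itself has the three routine cases $A=\neg B$, $A=B\wedge C$, $A=B\vee C$, each one line: by IH the subformula values are Boolean, so Boolean normality gives the classical output. There is no real obstacle here — the statement is essentially a corollary of Lemma~\ref{lemma:bn}, and the only thing to verify is the trivial implication ``falsity-collapsible $\Rightarrow$ Boolean normal.'' The one point to be slightly careful about is the direction of the set inclusion and the definition of $ss$-validity: $ss$ requires preservation of the value $1$ from premises to conclusions, so a countermodel needs premises valued $1$ and conclusions valued $0$ (not merely non-$1$), which is exactly what the lifted classical valuation provides. So I would simply write: ``By the remark above, $\mathbf X$ is Boolean normal; the claim then follows from Lemma~\ref{lemma:bn}.''
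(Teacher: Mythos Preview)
Your argument contains a genuine error: falsity-collapsibility does \emph{not} imply Boolean normality. The step ``$\tau_0$ is the identity on the image, so $\star(x_1,\dots,x_n)=\star^{\CL}(x_1,\dots,x_n)$'' is invalid because $\tau_0$ is not injective: $\tau_0(0)=\tau_0(\sfrac{1}{2})=0$. From $\tau_0(\star(\vec x))=\star^{\CL}(\vec x)$ with classical inputs you only learn that $\star(\vec x)\in\{\star^{\CL}(\vec x),\sfrac{1}{2}\}$ when $\star^{\CL}(\vec x)=0$. Concretely, look at Figure~\ref{falsity-collapsible}: a falsity-collapsible negation may have $\neg 1=\sfrac{1}{2}$, which is not Boolean normal. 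With such a scheme, your lifted valuation $v$ will not agree with $v_2$ on all formulae (e.g.\ $v_2(p)=1$ gives $v(\neg p)=\sfrac{1}{2}\neq 0=v_2(\neg p)$), so the induction ``$v(A)=v_2(A)$ for every $A$'' fails at the very first step, and the appeal to Lemma~\ref{lemma:bn} is illegitimate.

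The fix is exactly what the paper does: keep $v^*(p)=v(p)$ on atoms, but prove the \emph{weaker} invariant
\[
v(A)=1 \Rightarrow v^*(A)=1 \quad\text{and}\quad v(A)=0 \Rightarrow v^*(A)\in\{0,\sfrac{1}{2}\},
\]
using falsity-collapsibility (not Boolean normality) at each inductive step. This is enough for an $ss$-countermodel, since---contrary to what you wrote---an $ss$-countermodel only requires the conclusions to receive a value \emph{other than} $1$, not necessarily $0$. So your overall strategy (contrapositive, lift the classical countermodel unchanged on atoms) is right; what fails is the claimed shortcut through Boolean normality.
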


\begin{proof}

\bdr{Suppose $\Gamma \nvDash_{2} \Delta$, i.e.,there is a two-valued valuation $v$ such that $v(A)=1$ and $v(B)=0$, for every $A \in \Gamma$ and $B \in \Delta$. Now we will show that $\Gamma \nvDash^{ss}_{\mathbf{X}}\Delta$, i.e.,that there is a three-valued valuation $v^{*}$ such that $v^{*}(A)=1$ and $v^{*}(B)\in\{\sfrac{1}{2},0\}$, for every $A \in \Gamma$ and $B \in \Delta$. We take $v^{*}$ to be defined as follows:

\[
v^{*}(p)=
v(p)
\]

Now we show by induction on the complexity of the formula that, on the one hand, if $v(A)=0$, then $v^{*}(A)\in\{\sfrac{1}{2},0\}$ and, on the other hand, if $v(A)=1$, then $v^{*}(A)=1$.

\underline{Base case:} If $A$ is a propositional letter, then it holds by definition of the valuation $v^{*}$.

\underline{Inductive step:} Here we need to consider three cases:

\begin{itemize}
    \item $A = \neg B$. 
    \begin{itemize}
        \item If $v(\neg B)= 0$ then $v(B)=1$. By IH $v^{*}(B)=1$, then since $\mathbf{X}$ is falsity-collapsible $v^{*}(\neg B)\in \{0,\sfrac{1}{2}\}$.
        
        \item If $v(\neg B)= 1$ then $v(B)= 0$. By IH $v^{*}(B)\in\{\sfrac{1}{2},0\}$, then since $\mathbf{X}$ is falsity-collapsible $v^{*}(\neg B)=1$.

    \end{itemize}

    \item $A = B \wedge C$. 
    \begin{itemize}
        \item If $v(B \wedge C)= 0$ then $v(B)=0$ or $v(C)=0$. By IH $v^{*}(B)\in\{\sfrac{1}{2},0\}$ or $v^{*}(C)\in\{\sfrac{1}{2},0\}$, then since $\mathbf{X}$ is falsity-collapsible $v^{*}(B \wedge C)\in \{0,\sfrac{1}{2}\}$.
        
        \item If $v(B \wedge C)= 1$ then $v(B)= v(C)= 1$. Then, by IH $v^{*}(B)= v^{*}(C)= 1$. Thus, since $\mathbf{X}$ is falsity-collapsible $v^{*}(B \wedge C)=1$.
        
    \end{itemize}
    
    \item $A = B \vee C$. 
    \begin{itemize}
        \item If $v(B \vee C)= 1$ then $v(B)=1$ or  $v(C)=1$. So, depending on which of these two is the case, by IH $v^{*}(B)=1$ or $v^{*}(C)=1$, and then since $\mathbf{X}$ is falsity-collapsible $v^{*}(B \vee C)=1$.
        
        \item If $v(B \vee C)= 0$ then $v(B)=v(C)=0$. By IH $v^{*}(B)\in\{\sfrac{1}{2},0\}$ and $v^{*}(C)\in\{\sfrac{1}{2},0\}$, then since $\mathbf{X}$ is falsity-collapsible $v^{*}(B \vee C)\in \{0,\sfrac{1}{2}\}$.
        
    \end{itemize}
    \end{itemize}

    This shows $v^{*}$ is a three-valued valuation witnessing $\Gamma \nvDash^{ss}_{\mathbf{X}} \Delta$, and therefore that $ \vDash^{ss}_{\mathbf{X}} \ \subseteq \ \vDash_{2} $ as desired. 
    
}

    \end{proof}

\begin{theorem}\label{theorem:sufficient-conditions-ss}
Let $\mathbf{X}$ be a three-valued scheme. If $\mathbf{X}$ is falsity-collapsible, then $\vDash^{ss}_{\mathbf{X}} \ = \ \vDash_{2}$.

\end{theorem}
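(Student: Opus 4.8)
The plan is to prove the equality $\vDash^{ss}_{\mathbf{X}} = \vDash_{2}$ by splitting it into its two inclusions, each of which is already available from the lemmas established above. For the inclusion $\vDash^{ss}_{\mathbf{X}} \subseteq \vDash_{2}$, I would simply invoke Lemma~\ref{lemma:falsity-collapsible and ss}, which delivers exactly this under the standing hypothesis that $\mathbf{X}$ is falsity-collapsible. For the converse inclusion $\vDash_{2} \subseteq \vDash^{ss}_{\mathbf{X}}$, I would appeal to Lemma~\ref{lemma:falsitycollapsible}, instantiated at $y = s$, so that the relation $\vDash^{sy}_{\mathbf{X}}$ there reads as $\vDash^{ss}_{\mathbf{X}}$. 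Combining the two inclusions yields the claim.

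It is worth recalling the shape of the two underlying arguments, since together they explain why falsity-collapsibility is precisely the hypothesis that makes things work. In the soundness direction (Lemma~\ref{lemma:falsity-collapsible and ss}), from a classical countermodel $v$ to $\Gamma \vDash_{2} \Delta$ one keeps the atomic valuation unchanged, $v^{*} = v$, and argues by induction on formula complexity that $v(A) = 1$ forces $v^{*}(A) = 1$ while $v(A) = 0$ forces $v^{*}(A) \in \{0, \sfrac{1}{2}\}$; falsity-collapsibility is what drives the inductive step, as it guarantees that $0$ and $\sfrac{1}{2}$ are functionally interchangeable as inputs and outputs. Since the premises retain value $1$ and the conclusions stay in $\{0, \sfrac{1}{2}\}$, $v^{*}$ witnesses $\Gamma \nvDash^{ss}_{\mathbf{X}} \Delta$. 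In the completeness direction (Lemma~\ref{lemma:falsitycollapsible} with $y = s$), from a three-valued $ss$-countermodel $v$ one passes to the classical valuation $v^{*}$ collapsing $\sfrac{1}{2}$ to $0$, and again falsity-collapsibility ensures that the induction preserves the relevant bands of values.

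There is essentially no obstacle in the present statement: it is an immediate corollary of the two lemmas, and the only point requiring care is notational, namely matching $\vDash^{sy}_{\mathbf{X}}$ at $y = s$ with $\vDash^{ss}_{\mathbf{X}}$ and checking that falsity-collapsibility is the common hypothesis of both lemmas. The genuinely substantive half of the characterization — that \emph{only} falsity-collapsible schemes yield classical logic with $ss$ — is not part of this theorem but is carried out separately in Theorem~\ref{theorem:necessary-conditions-ss}.
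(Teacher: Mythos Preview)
Your proposal is correct and follows exactly the paper's approach: the paper's proof simply cites Lemmas~\ref{lemma:falsitycollapsible} and~\ref{lemma:falsity-collapsible and ss}, which is precisely the decomposition into the two inclusions you describe. Your additional commentary recalling the inductive arguments behind those lemmas is accurate but goes beyond what is needed here.
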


\begin{proof}
From Lemmas \ref{lemma:falsitycollapsible} and \ref{lemma:falsity-collapsible and ss}
\end{proof}

\begin{theorem}
\label{theorem:necessary-conditions-ss}
Let $\mathbf{X}$ be a three-valued scheme. If $\vDash^{ss}_{\mathbf{X}} \ = \ \vDash_{2}$, then $\mathbf{X}$ is falsity-collapsible (i.e.,the operations are those of the schemes in Figure \ref{falsity-collapsible}).
\end{theorem}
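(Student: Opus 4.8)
The plan is to read off the falsity-collapsibility of $\X$ one truth-table cell at a time, in each case instantiating a single classically valid inference at a well-chosen three-valued valuation and appealing to the hypothesis $\vDash^{ss}_{\X}=\vDash_2$ (in fact only the inclusion $\vDash_2\subseteq\vDash^{ss}_{\X}$ will be used). Recall that $\Gamma\vDash^{ss}_{\X}\Delta$ holds iff for every $\X$-valuation $v$, if $v(A)=1$ for all $A\in\Gamma$ then $v(B)=1$ for some $B\in\Delta$. In contrast with the $st$ case (Lemma~\ref{lemma:normal}), I would \emph{not} route the argument through full Boolean normality, since a falsity-collapsible negation may map $1$ to $\sfrac{1}{2}$, and indeed such a negation fails Boolean normality.

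First I would handle negation. From the classically valid $\vDash_2 p,\neg p$ we get $\vDash^{ss}_{\X} p,\neg p$, i.e.\ for every $v$ either $v(p)=1$ or $v(\neg p)=1$; evaluating at a $v$ with $v(p)=0$, and then at a $v$ with $v(p)=\sfrac{1}{2}$, forces $f_\neg(0)=1$ and $f_\neg(\sfrac{1}{2})=1$. From $p,\neg p\vDash_2 q$ we get $p,\neg p\vDash^{ss}_{\X} q$, and evaluating at a $v$ with $v(p)=1$, $v(q)=0$ forces $f_\neg(1)\neq 1$, i.e.\ $f_\neg(1)\in\{0,\sfrac{1}{2}\}$. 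These are exactly the three entries of the negation table in Figure~\ref{falsity-collapsible}.

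Next I would treat conjunction and disjunction in parallel. For conjunction, $p,q\vDash_2 p\wedge q$ gives $f_\wedge(1,1)=1$, and the two projections $p\wedge q\vDash_2 p$ and $p\wedge q\vDash_2 q$ give $f_\wedge(x,y)\neq 1$ whenever $(x,y)\neq(1,1)$: if $x\neq 1$, evaluating $p\wedge q\vDash^{ss}_{\X} p$ at $v(p)=x$, $v(q)=y$ would otherwise force $x=1$, and symmetrically if $y\neq 1$ one uses the other projection. Dually, for disjunction the additions $p\vDash_2 p\vee q$ and $q\vDash_2 p\vee q$ give $f_\vee(1,y)=1$ and $f_\vee(x,1)=1$ for all $x,y$, while $p\vee q\vDash_2 p,q$ gives $f_\vee(x,y)\neq 1$ whenever $x\neq 1$ and $y\neq 1$. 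Going through the cells, this pins down the $\wedge$- and $\vee$-tables to exactly those of Figure~\ref{falsity-collapsible}.

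Finally I would assemble the pieces: the constraints obtained say that $f_\neg(1)$, every $f_\wedge(x,y)$ with $(x,y)\neq(1,1)$, and every $f_\vee(x,y)$ with $x,y\neq 1$ lie in $\{0,\sfrac{1}{2}\}$, while the remaining entries equal their classical values — which is precisely the identity $\tau_0(f(\vec x))=f^{\CL}(\tau_0(\vec x))$ for each operation $f$ of $\X$, i.e.\ that $\X$ is falsity-collapsible. I do not expect a genuine obstacle: the content is entirely in selecting, for each cell of each table, an inference that isolates it, and in remembering that the left/right asymmetry of the binary connectives forces one to use both projections for $\wedge$ and both additions for $\vee$; the rest is routine bookkeeping, and together with Theorem~\ref{theorem:sufficient-conditions-ss} it yields the stated characterization.
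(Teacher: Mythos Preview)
Your proposal is correct and takes essentially the same approach as the paper: both argue, cell by cell, that each entry of the three truth tables is forced by a single classically valid inference (the paper uses $q\vDash_2 p,\neg p$ where you use $\vDash_2 p,\neg p$, and otherwise the test inferences coincide exactly). Your observation that one should not detour through Boolean normality here is apt, and the paper indeed proceeds directly without invoking Lemma~\ref{lemma:normal}.
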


\begin{proof}
We will show that if a three-valued scheme $\mathbf{X}$ is not falsity-collapsible then $\ \vDash_{2}\ \nsubseteq \vDash^{ss}_{\mathbf{X}} $. We will show it by cases, considering in order each of the connectives of each possible non-falsity-collapsible scheme. 

\begin{itemize}
    \item The case of the negation: 
    \begin{enumerate}
        \item Assume $\mathbf{X}$ is such that $\neg 1 = 1$. Then, $ p, \neg p \nvDash^{ss}_{\mathbf{X}} q$, but of course $p, \neg p \vDash_{2}  q$.
        \item Assume $\mathbf{X}$ is such that $\neg \sfrac{1}{2} \neq 1$. Then, $ q \nvDash^{ss}_{\mathbf{X}} p, \neg p$, but of course $q \vDash_{2}  p, \neg p$.
        \item Assume $\mathbf{X}$ is such that $\neg 0 \neq 1$. Then, $ q \nvDash^{ss}_{\mathbf{X}} p, \neg p$, but of course $q \vDash_{2}  p, \neg p$.
    \end{enumerate}
    
     \item The case of the conjunction: 
    \begin{enumerate}
        \item Assume $\mathbf{X}$ is such that $x \wedge y \neq 1$, for $x=1$ and $y=1$. Then, $ p, q \nvDash^{ss}_{\mathbf{X}}p \wedge q$, but of course $ p, q \vDash_{2}p \wedge q$.
         \item Assume $\mathbf{X}$ is such that $x \wedge y = 1$, for $x\neq 1$. Then, $ p \wedge q \nvDash^{ss}_{\mathbf{X}} p$, but of course $ p \wedge q \vDash_{2} p$.
         \item Assume $\mathbf{X}$ is such that $x \wedge y = 1$, for $y\neq 1$. Then, $ p \wedge q \nvDash^{ss}_{\mathbf{X}} q$, but of course $ p \wedge q \vDash_{2} q$.
    \end{enumerate}
    
     \item The case of the disjunction: 
    \begin{enumerate}
        \item Assume $\mathbf{X}$ is such that $x \vee y \neq 1$, for $x= 1$. Then, $ p \nvDash^{ss}_{\mathbf{X}} p \vee q$, but of course $ p \vDash_{2} p \vee q$.
        \item Assume $\mathbf{X}$ is such that $x \vee y \neq 1$, for $y= 1$. Then, $ q \nvDash^{ss}_{\mathbf{X}} p \vee q$, but of course $ q \vDash_{2}p \vee q$.
         \item Assume $\mathbf{X}$ is such that $x \vee y = 1$, for $x \neq 1$ and $y \neq 1$. Then, $ p \vee q \nvDash^{ss}_{\mathbf{X}} p, q$, but of course $ p \vee q \vDash_{2} p, q$. \qedhere
    \end{enumerate}
\end{itemize}
\end{proof}

\subsection{The proofs for $tt$}

We omit all the proofs of this section, since basically they are dual to those for $ss$.

\begin{lemma}\label{lemma:truth-collapsible and tt}
Let $\mathbf{X}$ be a three-valued scheme. If $\mathbf{X}$ is truth-collapsible, then $\vDash^{tt}_{\mathbf{X}} \ \subseteq \ \vDash_{2}$.
\end{lemma}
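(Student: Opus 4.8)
The final statement to prove is Lemma~\ref{lemma:truth-collapsible and tt}: if $\mathbf{X}$ is truth-collapsible, then $\vDash^{tt}_{\mathbf{X}} \subseteq \vDash_2$.

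The plan is to mirror exactly the argument given for Lemma~\ref{lemma:falsity-collapsible and ss}, exploiting the duality between $ss$ and $tt$ and between truth-collapsibility and falsity-collapsibility. First I would argue contrapositively: assume $\Gamma \nvDash_2 \Delta$, so there is a two-valued valuation $v$ with $v(A)=1$ for every $A\in\Gamma$ and $v(B)=0$ for every $B\in\Delta$. I then want to build a three-valued $\mathbf{X}$-valuation $v^{*}$ witnessing $\Gamma \nvDash^{tt}_{\mathbf{X}}\Delta$, i.e., with $v^{*}(A)\in\{1,\sfrac{1}{2}\}$ for every $A\in\Gamma$ and $v^{*}(B)=0$ for every $B\in\Delta$. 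The natural choice, dual to the one in Lemma~\ref{lemma:falsity-collapsible and ss}, is simply $v^{*}(p)=v(p)$ on propositional letters.

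The core of the proof is then an induction on formula complexity establishing the two-sided claim: if $v(A)=1$ then $v^{*}(A)\in\{1,\sfrac{1}{2}\}$, and if $v(A)=0$ then $v^{*}(A)=0$. The base case is immediate by definition of $v^{*}$. For the inductive step one treats $\neg B$, $B\wedge C$, $B\vee C$. In each case one uses the classical clause to read off the classical values of the immediate subformulae, applies the induction hypothesis, and then invokes truth-collapsibility of the relevant operation — which says exactly that the values $1$ and $\sfrac{1}{2}$ are interchangeable as inputs and the output lies in $\{1,\sfrac{1}{2}\}$ rather than $\{0\}$ in precisely the cells where classically the output is $1$, and equals $0$ exactly where classically it is $0$. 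Concretely: for $\neg B$, if $v(\neg B)=1$ then $v(B)=0$, so $v^{*}(B)=0$ by IH, so $v^{*}(\neg B)\in\{1,\sfrac{1}{2}\}$ by truth-collapsibility (inspect Fig.~\ref{truth-collapsible}); if $v(\neg B)=0$ then $v(B)=1$, so $v^{*}(B)\in\{1,\sfrac{1}{2}\}$ by IH, so $v^{*}(\neg B)=0$. The conjunction and disjunction cases are handled the same way, using that $v^{*}(B\wedge C)=0$ iff one conjunct gets $0$ (here we need: if either input is $0$ the truth-collapsible $\wedge$ outputs $0$, and if both inputs are in $\{1,\sfrac{1}{2}\}$ it outputs something in $\{1,\sfrac{1}{2}\}$), and dually for $\vee$.

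I expect no genuine obstacle here: the lemma is explicitly flagged as dual to the $ss$ case, and indeed the paper says it omits the proofs of the $tt$ section for that reason. The only thing to be careful about is matching the side of the collapse correctly — truth-collapsibility merges $\sfrac{1}{2}$ with $1$, so it is the preservation of membership in $\{1,\sfrac{1}{2}\}$ (not bare truth) that must be tracked through the induction, and the designated set for $tt$ is $\{1,\sfrac{1}{2}\}$, which is exactly why this particular collapse is the one that works. Once the induction is complete, applying it to the members of $\Gamma$ and $\Delta$ shows $v^{*}$ witnesses $\Gamma\nvDash^{tt}_{\mathbf{X}}\Delta$, completing the contrapositive and hence the lemma.
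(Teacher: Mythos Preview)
Your proposal is correct and is precisely the dual of the paper's proof of Lemma~\ref{lemma:falsity-collapsible and ss}, which is exactly what the paper intends (it explicitly omits the $tt$ proofs as dual to the $ss$ ones). The choice $v^{*}(p)=v(p)$ and the two-sided induction you describe match the paper's method step for step.
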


\begin{theorem}
\label{theorem:sufficient-conditions-tt}
Let $\mathbf{X}$ be a three-valued scheme. If $\mathbf{X}$ is truth-collapsible, then $\vDash^{tt}_{\mathbf{X}} \ = \ \vDash_{2}$.
\end{theorem}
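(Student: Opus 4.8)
The plan is to prove the two inclusions separately, each as the exact dual of the corresponding fact already established for $ss$. For $\vDash^{tt}_{\mathbf{X}} \subseteq \vDash_{2}$ we already have Lemma~\ref{lemma:truth-collapsible and tt}, so that direction is a citation. For the converse inclusion $\vDash_{2} \subseteq \vDash^{tt}_{\mathbf{X}}$ we invoke Lemma~\ref{lemma:truthcollapsible} in the instance $y=t$. Thus the theorem follows immediately by combining Lemmas~\ref{lemma:truthcollapsible} and~\ref{lemma:truth-collapsible and tt}; what follows is a sketch of how those two lemmas go through in the truth-collapsible setting, mirroring the argument for Lemma~\ref{lemma:falsity-collapsible and ss} and the proof of Lemma~\ref{lemma:falsitycollapsible}.

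For $\vDash^{tt}_{\mathbf{X}} \subseteq \vDash_{2}$ (Lemma~\ref{lemma:truth-collapsible and tt}), I would assume $\Gamma \nvDash_{2} \Delta$, witnessed by a two-valued $v$ with $v(A)=1$ for all $A\in\Gamma$ and $v(B)=0$ for all $B\in\Delta$, and define a three-valued $v^{*}$ on atoms by $v^{*}(p)=\sfrac{1}{2}$ if $v(p)=1$ and $v^{*}(p)=v(p)=0$ otherwise; this recolors the designated value $1$ as the (for $tt$, equally designated) value $\sfrac{1}{2}$. An induction on formula complexity then shows that $v(A)=1$ implies $v^{*}(A)\in\{1,\sfrac{1}{2}\}$ and $v(A)=0$ implies $v^{*}(A)=0$, the inductive step using truth-collapsibility, i.e. $\tau_{1}(\star(x_{1},\dots,x_{n}))=\star^{\CL}(\tau_{1}(x_{1}),\dots,\tau_{1}(x_{n}))$, which ensures that moving an argument between $1$ and $\sfrac{1}{2}$ never changes whether the output lies in $\{1,\sfrac{1}{2}\}$ or equals $0$. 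Then $v^{*}$ witnesses $\Gamma \nvDash^{tt}_{\mathbf{X}} \Delta$.

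For $\vDash_{2} \subseteq \vDash^{tt}_{\mathbf{X}}$ (Lemma~\ref{lemma:truthcollapsible} with $y=t$), I would assume $\Gamma \nvDash^{tt}_{\mathbf{X}} \Delta$, witnessed by a three-valued $v$ with $v(A)\in\{1,\sfrac{1}{2}\}$ for all $A\in\Gamma$ and $v(B)=0$ for all $B\in\Delta$, and define $v^{*}(p)=1$ if $v(p)=\sfrac{1}{2}$ and $v^{*}(p)=v(p)$ otherwise, i.e. collapse $\sfrac{1}{2}$ onto $1$. An induction on complexity, again invoking truth-collapsibility, gives $v^{*}(A)=\tau_{1}(v(A))$ for every $A$, so $v(A)\in\{1,\sfrac{1}{2}\}$ implies $v^{*}(A)=1$ and $v(A)=0$ implies $v^{*}(A)=0$; the collapsibility equation is exactly what propagates this identity through each connective. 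Hence $v^{*}$ is a classical countermodel and $\Gamma \nvDash_{2} \Delta$.

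The argument is routine; the only point requiring care — and where I would double-check the bookkeeping — is the inductive step, verifying that the functional equation defining $\alpha$-collapsibility delivers precisely the two-case preservation statements used in the induction, including the single-argument instance for negation. Note that although the cells in Figure~\ref{truth-collapsible} display several admissible values, a scheme fixes one operation per connective, so there is no genuine nondeterminism to handle. Beyond this dualization of the $ss$ proofs, I expect no obstacle.
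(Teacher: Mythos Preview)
Your proposal is correct and matches the paper's approach exactly: the paper's proof is the one-line citation ``From Lemmas~\ref{lemma:truth-collapsible and tt} and~\ref{lemma:truthcollapsible},'' and your sketches of those lemmas are the intended dualizations of the $ss$ arguments (the paper in fact omits them, saying they are dual to the $ss$ proofs). Your formulation $v^{*}(A)=\tau_{1}(v(A))$ for the second direction is a clean way to package the induction.
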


\begin{proof}
From Lemmas \ref{lemma:truth-collapsible and tt} and \ref{lemma:truthcollapsible}.
\end{proof}

\begin{theorem}
\label{theorem:necessary-conditions-tt}
Let $\mathbf{X}$ be a three-valued scheme. If $\vDash^{tt}_{\mathbf{X}} \ = \ \vDash_{2}$, then $\mathbf{X}$ is truth-collapsible (i.e.,the operations are those of the schemes in Figure \ref{truth-collapsible}).
\end{theorem}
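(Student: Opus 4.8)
The plan is to mirror the proof of Theorem~\ref{theorem:necessary-conditions-ss}, exploiting the duality between $ss$ and $tt$. Relabelling truth values by $x \mapsto 1-x$ (so $1$ and $0$ are interchanged while $\sfrac{1}{2}$ is fixed), swapping $\wedge$ with $\vee$, and reversing every sequent turns $ss$-validity into $tt$-validity, classical validity into itself, and falsity-collapsibility into truth-collapsibility; so one route is simply to apply this translation to Theorem~\ref{theorem:necessary-conditions-ss}. A self-contained route, which I would prefer for readability, is to replay the case analysis directly: assuming $\vDash^{tt}_\X \,=\, \vDash_2$, show that each of $\neg$, $\wedge$, $\vee$ must agree cell-by-cell with the corresponding table in Figure~\ref{truth-collapsible}, by exhibiting, for each forbidden value, an explicit classically valid sequent that fails in $\vDash^{tt}_\X$. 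Throughout, a $tt$-counterexample to a sequent is a valuation sending every premise into $\{1,\sfrac{1}{2}\}$ and every conclusion to $0$.

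For negation, first suppose $\neg 0 = 0$: then $v(p)=0$ forces $v(\neg p)=0$, so $q \nvDash^{tt}_\X p, \neg p$ although $q \vDash_2 p,\neg p$; hence $\neg 0 \in \{1,\sfrac{1}{2}\}$. Next, if $\neg\sfrac{1}{2} \neq 0$, then with $v(p)=\sfrac{1}{2}$ both $v(p)$ and $v(\neg p)$ lie in $\{1,\sfrac{1}{2}\}$, so $p, \neg p \nvDash^{tt}_\X q$ (taking $v(q)=0$), contradicting explosion $p,\neg p \vDash_2 q$; the same sequent and the valuation $v(p)=1$ rule out $\neg 1 \neq 0$. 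This forces $\neg 1 = \neg\sfrac{1}{2} = 0$ and $\neg 0 \in \{1,\sfrac{1}{2}\}$, i.e. the negation column of Figure~\ref{truth-collapsible}.

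For conjunction it suffices to show that $x \wedge y = 0$ holds exactly when $x=0$ or $y=0$. If $0 \wedge y \neq 0$ for some $y$, then $v(p)=0$, $v(q)=y$ witness $p \wedge q \nvDash^{tt}_\X p$, contradicting $p\wedge q \vDash_2 p$; symmetrically $x \wedge 0 \neq 0$ breaks $p\wedge q \vDash_2 q$. And if $x \wedge y = 0$ for some $x,y \neq 0$, then $v(p)=x$, $v(q)=y$ witness $p, q \nvDash^{tt}_\X p \wedge q$, contradicting $p,q \vDash_2 p\wedge q$. Dually, for disjunction one shows $x \vee y = 0$ exactly when $x=y=0$: if $0 \vee 0 \neq 0$ then $p \vee q \nvDash^{tt}_\X p, q$ against $p\vee q \vDash_2 p,q$; and if $x \vee y = 0$ with $x \neq 0$ (resp. $y \neq 0$) then $p \nvDash^{tt}_\X p\vee q$ (resp. $q \nvDash^{tt}_\X p\vee q$). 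Together these pin down the conjunction and disjunction tables to those of Figure~\ref{truth-collapsible}, completing the argument.

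I do not expect a genuine obstacle here; the only point requiring care is the bookkeeping of the duality. Because the translation swaps $\wedge$ and $\vee$, the conjunction clauses for $tt$ correspond to the \emph{disjunction} clauses for $ss$ and vice versa, so one must not transcribe the $ss$ conjunction cases verbatim. Beyond that, each step is a routine check that the displayed valuation makes the displayed classical sequent fail under the $tt$ clause, exactly as in the proof of Theorem~\ref{theorem:necessary-conditions-ss}.
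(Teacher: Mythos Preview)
Your proposal is correct and takes essentially the same approach as the paper: the paper explicitly omits this proof as dual to Theorem~\ref{theorem:necessary-conditions-ss}, and your case-by-case analysis (using explosion, excluded middle, and the $\wedge$/$\vee$ introduction and elimination sequents) matches the intended dual argument cell for cell. Your remark about the $\wedge$/$\vee$ swap under the duality is apt and your bookkeeping is correct.
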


\section{Gentzen-regularity and classical logic}\label{app:gr}

Following \cite{chemla2019many}, we call a connective Gentzen-regular if its behavior, whether in the conclusion or in the premise of an argument, can be explained fully in terms of conjunction of sequents involving the subformulae related by that connective. Formally, the definition is the following:

\begin{definition}[Gentzen-regular connectives]\label{def:regconn}
	Given a consequence relation $\vdash$, an $n$-ary connective $C$ (for $n\geq 0$) is \textit{Gentzen-regular} for it if there exist
	$\mathcal{B}^p\subseteq\mathcal{P}(\{1,..., n\})\times\mathcal{P}(\{1,..., n\})$ and
	$\mathcal{B}^c\subseteq\mathcal{P}(\{1,..., n\})\times\mathcal{P}(\{1,..., n\})$
such that
	$\forall\Gamma, \Delta, \forall F_1, ..., F_n:$
	\[\begin{array}{c@{\textrm{ iff }}c}
	\Gamma, C(F_1, ..., F_n) \vdash \Delta
		& \bigwedge\limits_{(B_p,B_c)\in \mathcal{B}^p} 
			{\Gamma, \{F_i: i\in B_p\}\vdash \{F_i: i\in B_c\}, \Delta}\\

	\Gamma \vdash C(F_1, ..., F_n) , \Delta
		& \bigwedge\limits_{(B_p,B_c)\in \mathcal{B}^c} 
			{\Gamma, \{F_i: i\in B_p\}\vdash \{F_i: i\in B_c\}, \Delta}\\
	\end{array}\]
	\end{definition}
	
	The next lemma relates this feature of the connectives, what it is for a logic to be classical, and a structural condition on sets of atomic propositions (atom-sharing between premises and conclusions). 

\begin{lemma}\label{lem:gr}
A propositional logic $L=\langle \pe{\mathcal{L}}, \vdash, C\rangle $ is {inferentially} classical if and only if its connectives in $C$ are Gentzen-regular and $\vdash$ is such that for $\Gamma$ and $\Delta$ any two sets of atomic propositions, $\Gamma\vdash \Delta$ iff $\Gamma \cap \Delta \neq \emptyset$.
\end{lemma}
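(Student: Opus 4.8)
Here is a plan for proving the lemma (stated as Lemma~\ref{lem:gr}).

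The plan is to prove the two directions separately, and the forward one is routine. Assuming $\vdash\,=\,\models_2$, I would first note that the structural condition on atoms is just classical semantics restricted to atomic sequents: for atomic $\Gamma,\Delta$, if they share an atom $p$ then truth of all of $\Gamma$ forces truth of $p\in\Delta$, while if $\Gamma\cap\Delta=\emptyset$ the valuation sending the atoms of $\Gamma$ to $1$ and all others to $0$ refutes $\Gamma\vdash\Delta$. For Gentzen-regularity, I would exhibit a witness for each $n$-ary connective $C$ by reading it off the classical truth function $f_C$: take $\mathcal B^p=\{(\{i:a_i=1\},\{i:a_i=0\}):f_C(\vec a)=1\}$ and $\mathcal B^c=\{(\{i:a_i=1\},\{i:a_i=0\}):f_C(\vec a)=0\}$, and verify by a direct valuation argument (each subformula $F_i$ is placed on the side of the turnstile dictated by the bit $a_i$, so no ambient negation is needed) that these satisfy the two biconditionals of Definition~\ref{def:regconn}.

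For the converse -- the substantial direction -- assume every connective in $C$ is Gentzen-regular for $\vdash$ and that atomic sequents behave as stated. I would argue by well-founded induction on the finite multiset $\mu(\Gamma\vdash\Delta)$ collecting the connective-counts $c(A)$ (number of connective occurrences) of the formulas of $\Gamma$ and of $\Delta$, each side counted separately, ordered by the Dershowitz--Manna multiset order on $(\mathbb N,<)$, which is well-founded. In the base case every formula is atomic, and the structural hypothesis together with the classical fact recalled above gives $\Gamma\vdash\Delta\iff\Gamma\cap\Delta\neq\emptyset\iff\Gamma\models_2\Delta$. For the inductive step, pick any non-atomic formula in the sequent, say $C(F_1,\dots,F_n)$ on the premise side (the conclusion side being dual), write $\Gamma=\Gamma',C(\vec F)$, and apply the premise-side biconditional for $C$:
\[
\Gamma',C(\vec F)\vdash\Delta \quad\Longleftrightarrow\quad \bigwedge_{(B_p,B_c)\in\mathcal B^p}\ \Gamma',\{F_i:i\in B_p\}\vdash\{F_i:i\in B_c\},\Delta .
\]
Each conjunct on the right replaces the single formula $C(\vec F)$, of count $1+\sum_i c(F_i)$, by some of its immediate subformulas $F_i$, each of strictly smaller count, so its multiset is strictly $\mu$-below that of the original sequent; hence the induction hypothesis rewrites each conjunct as the corresponding $\models_2$-statement.

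The remaining step -- and the one I expect to be the real obstacle -- is to recognize the resulting conjunction of $\models_2$-statements as exactly $\Gamma',C(\vec F)\models_2\Delta$, i.e.\ to show that the witness set $\mathcal B^p$ supplied by the hypothesis also validates the premise-side biconditional for $\models_2$ (and dually for $\mathcal B^c$); note this is not automatic, since Gentzen-regular witnesses are far from unique. My plan here is a \emph{truth-table extraction}: feed the biconditional for $C$ the pairwise-distinct atoms $p_1,\dots,p_n$ together with the sequents $\{p_i:i\in T\},C(\vec p)\vdash\{p_i:i\notin T\}$, one for each $T\subseteq\{1,\dots,n\}$. A single application of the rule turns each of these into a conjunction of purely atomic sequents, which the atom-sharing hypothesis decides on the spot; tabulating the verdicts over all $T$ pins down $\{\vec a:f_C(\vec a)=1\}$ and shows that, as a condition, $\mathcal B^p$ is interchangeable with the satisfying-row witness set used in the forward direction, after which a short projection argument (evaluate an alleged countermodel of $\Gamma',C(\vec F)\models_2\Delta$ and read off the relevant $T$) lifts the agreement from these generic instances to arbitrary $\Gamma',\Delta,\vec F$. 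The genuinely delicate part, since we have assumed no structural rules for $\vdash$ beyond the atomic condition, is to verify that the premise-side and conclusion-side witnesses are jointly consistent and jointly determine a \emph{total} two-valued function -- equivalently, that no Gentzen-regular choice for any connective can be ``defective'' in this sense -- and carrying this out is in effect what forces $\vdash$ to satisfy the classical structural rules (reflexivity, weakening, cut) on complex formulas. Once this is done, the displayed biconditional holds verbatim with $\models_2$ in place of $\vdash$, the inductive step closes, and together with the forward direction we obtain $\vdash\,=\,\models_2$, i.e.\ that $L$ is inferentially classical.
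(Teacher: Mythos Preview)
Your overall strategy---reduce any sequent by Gentzen rules to a conjunction of atomic sequents and invoke the atomic hypothesis---matches the paper's, and your Dershowitz--Manna multiset induction is just a more careful packaging of what the paper calls ``induction on the complexity of the formulae involved''.

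The substantive difference is the step you flag as ``the real obstacle'': showing that the $L$-witnesses $\mathcal{B}^p,\mathcal{B}^c$ also witness Gentzen-regularity for $\models_2$. The paper does not argue this inside the proof; its worked example simply uses the standard LK rule for disjunction, and a remark \emph{after} the proof explains that the Gentzen rule is taken to determine which classical operation the connective realizes (``if $\Gamma',A\wedge B\vdash\Delta$ holds iff $\Gamma',A\vdash\Delta$ and $\Gamma',B\vdash\Delta$ hold, then `$\wedge$' is actually just another name for disjunction''). Under that reading the witnesses agree by fiat, and the same reduction runs verbatim in $L$ and in $\models_2$; no extraction is needed.

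Your truth-table extraction is in effect a rigorous rendering of that remark, but the claim that it ``pins down $\{\vec a:f_C(\vec a)=1\}$'' overstates what it achieves: the extraction determines the truth function encoded by the $L$-witnesses, not an independently fixed $f_C$. Indeed, if one reads the lemma strictly---$\models_2$ with \emph{fixed} $f_C$, and ``Gentzen-regular'' in the full generality of Definition~\ref{def:regconn}---then the backward direction is simply false, and no amount of extraction can rescue it. Take $C=\{\neg\}$ with $f_\neg$ classical negation, but give $\neg$ the identity rules $\Gamma,\neg A\vdash\Delta\Leftrightarrow\Gamma,A\vdash\Delta$ and $\Gamma\vdash\neg A,\Delta\Leftrightarrow\Gamma\vdash A,\Delta$ in $L$: both hypotheses hold, yet $\neg p\vdash p$ while $\neg p\not\models_2 p$. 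So your worry is well-founded, but its resolution is interpretive (adopt the paper's reading that the rule fixes the operation, i.e., that $\models_2$ uses whatever classical function the witnesses encode) rather than via the further technical argument you sketch.
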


\begin{proof}
The left-to-right direction holds because in classical logic, and in any logic that satisfies the same inferences, connectives are Gentzen-regular, and inferences involving only atomic propositions behave as described.

Conversely, suppose the right-hand-side holds for a logic $L$. Consider then sets of premises and conclusions $\Gamma$ and $\Delta$. If $\Gamma$ and $\Delta$ only contain atomic propositions, then $\Gamma\vdash \Delta$ holds in $L$ iff $\Gamma \cap \Delta \neq \emptyset$, by hypothesis, iff it holds in classical logic then.

By induction on the complexity of the formulae involved, the assumption \pe{of Gentzen-regularity} allows us to generalize this equivalence between $L$ and classical logic to inferences with non-atomic propositions. Indeed, the Gentzen regularity rules reduce the validity of any inference $\Gamma\vdash \Delta$ to the validity of a conjunction of inferences involving formulae of strictly lower syntactic complexity.\footnote{One limit case may be mentioned: Gentzen-regularity rules may reduce the complexity so much that they eliminate the formula altogether. You may obtain this through an empty conjunction in the Definition~\ref{def:regconn}. As an illustration, $\top$ seen as a $0$-ary connective, has such a rule for its  Gentzen-conclusion-rule: $\Gamma \vdash \top, \Delta$ is valid no matter what. This edge case does not block the inductive step of this proof.}



For concreteness, consider $\Gamma', A\vee B\vdash \Delta$. This holds if and only if both $\Gamma', A\vdash \Delta$ and $\Gamma', B\vdash \Delta$ hold. This shows how the Gentzen premise-rule reduces the verification of inferences with disjunction in premises, to the verification of strictly simpler inferences, with no disjunctions in premises.
Eliminating connectives one after the other thanks to Gentzen rules, in premises and in conclusion, we can recursively reduce the complexity of the inferences \pe{until no more reduction is possible}. That is, we can find sets of \textit{atomic} propositions $\Gamma_i, \Delta_i$ such that $\Gamma\vdash \Delta$ holds if and only if the conjunction of the $\Gamma_i\vdash \Delta_i$ hold. 
\end{proof}

Three remarks may be made about this result. The first is that a logic can obey the conditions of Lemma \ref{lem:gr} without coinciding exactly with classical logic. For instance, if $C=\{\neg, \leftrightarrow\}$, with $\neg$ and $\leftrightarrow$ obeying the expected Gentzen rules, then the resulting logic is inferentially classical but is only a fragment of classical logic (because it is functionally incomplete). The second is that irrespective of how Gentzen regular connectives are \textit{named} in $\mathcal{L}$, what matters concerns which operations they correspond to. To use the same example as in the proof, if $\Gamma', A\wedge B\vdash \Delta$ holds iff $\Gamma', A\vdash \Delta$ and $\Gamma', B\vdash \Delta$ hold in $\mathcal{L}$, then it means that ``$\wedge$'' is actually just another name for disjunction in that logic. The third finally is that the structural condition in the Lemma simply corresponds to a form of (strong) Reflexivity on the atoms. It can be verified that it directly implies the admissibility of other classical structural rules, including Exchange, Contraction, Weakening, and Cut, for sequents involving only atoms. For example, if $\Gamma, \Gamma', \Delta, \Delta'$ are sets of atoms, then it follows that $\Gamma\vdash \Delta, p$ and $\Gamma', p\vdash \Delta'$ imply $\Gamma, \Gamma'\vdash \Delta, \Delta'$.

\section{Monotonic operators}\label{app:mon}

\begin{fact}\label{fact:mon}
Given a truth table for a unary or binary operator $f$, the operator is monotonic only if no two horizontally or vertically adjacent cells of the corresponding matrix contain a 1 and a 0.
\end{fact}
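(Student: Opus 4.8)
The plan is to prove the stated implication directly: assuming $f$ is monotonic, show that no two horizontally or vertically adjacent cells of its matrix carry a $1$ and a $0$. First I would fix the conventional two-dimensional layout of the truth table --- a single column for a unary operator, a $3\times 3$ array for a binary one --- with the inputs listed along each axis in the order $1, \sfrac{1}{2}, 0$, so that $\sfrac{1}{2}$ always occupies the middle position of each axis (this is exactly the convention used in Figures~\ref{boolean-normal-monotonic}--\ref{boolean-normal-falsity-collapsible}, and its choice is what makes the statement work).

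Next I would record the key structural observation about adjacency: two cells that are adjacent horizontally or vertically encode input tuples $\vec a, \vec b$ that agree in every coordinate except one, and in that coordinate they carry two axis-consecutive values; given the fixed ordering of the axes, those two values are necessarily $\sfrac{1}{2}$ and one of $1$ or $0$. Since $\sfrac{1}{2} <_{\rm I} 1$ and $\sfrac{1}{2} <_{\rm I} 0$, the tuple carrying $\sfrac{1}{2}$ in that coordinate is componentwise strictly below the other, so without loss of generality $\vec a \leq_{\rm I}^{comp} \vec b$.

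Then I would invoke monotonicity to get $f(\vec a) \leq_{\rm I} f(\vec b)$, and conclude from the shape of the information order (Figure~\ref{fig:inforder}): $1$ and $0$ are the two maximal, mutually $\leq_{\rm I}$-incomparable elements, so $f(\vec a) \leq_{\rm I} f(\vec b)$ excludes $\{f(\vec a), f(\vec b)\} = \{1, 0\}$ --- this would force either $1 \leq_{\rm I} 0$ or $0 \leq_{\rm I} 1$, and neither holds. Hence the two adjacent cells do not contain a $1$ and a $0$, and since the adjacent pair was arbitrary, the claim follows.

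I do not expect a genuine obstacle here; the argument is a one-line appeal to the incomparability of $1$ and $0$ in $\leq_{\rm I}$ once adjacency has been translated into a componentwise comparison. The only point requiring care is pinning down the layout convention precisely --- in particular handling the unary case, where ``the matrix'' degenerates to a column and only vertical adjacency arises --- so that the assertion ``adjacent cells differ by replacing one classical coordinate with $\sfrac{1}{2}$'' is literally correct. (The converse implication fails, which is why the sharper characterization is deferred, but it is not needed for this statement.)
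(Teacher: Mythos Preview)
Your proposal is correct and follows essentially the same approach as the paper: both arguments exploit that horizontally or vertically adjacent cells correspond to input tuples that are $\leq_{\rm I}^{comp}$-comparable (one coordinate is $\sfrac{1}{2}$, the other classical), and then use the $\leq_{\rm I}$-incomparability of $1$ and $0$ to derive a contradiction with monotonicity. The paper argues by contrapositive via a representative case plus an appeal to symmetry, whereas you treat all adjacent pairs uniformly; the mathematical content is the same.
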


\begin{proof}

In the unary case, suppose as a particular case that $f(1/2)=0$ and $f(1)=1$. Then, although $1/2<_{\rm I}1$, their images by $f$ are incomparable. The other cases are symmetric. In the binary case, suppose as a particular case of vertical adjacent cells that $f(1/2,1)=0$ when $f(0,1)=1$. Then although $(1/2,1)<^{comp}_{\rm I}(0,1)$, their images by $f$ are incomparable, which violates monotonicity. The other cases are symmetric.
\end{proof}

\begin{fact}

A binary normal Boolean operator $f$ is monotonic if and only if no two adjacent cells of its matrix get values $1$ and $0$ and if $f(1/2,1/2)$ is not greater than or incomparable with the value of any other cell.

\end{fact}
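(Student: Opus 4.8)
The plan is to prove the two directions separately, the substantive one being the sufficiency of the two stated conditions. Throughout I use the matrix convention of Figure~\ref{boolean-normal-monotonic}: rows indexed by the first argument and columns by the second, both listed as $1, \sfrac{1}{2}, 0$; so the middle (``$\sfrac{1}{2}$'') row is vertically adjacent to both Boolean rows, and similarly the middle column is horizontally adjacent to both Boolean columns.

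For the left-to-right direction, assuming $f$ monotonic, the adjacency clause is immediate from Fact~\ref{fact:mon}. For the clause on $f(\sfrac{1}{2},\sfrac{1}{2})$, I would note that $\sfrac{1}{2}$ is the bottom of $\leq_{\rm I}$ (Figure~\ref{fig:inforder}), so $\langle \sfrac{1}{2},\sfrac{1}{2}\rangle \leq_{\rm I}^{comp} \langle a,b\rangle$ for every pair $\langle a,b\rangle$; monotonicity then forces $f(\sfrac{1}{2},\sfrac{1}{2}) \leq_{\rm I} f(a,b)$ for all $a,b$, that is, $f(\sfrac{1}{2},\sfrac{1}{2})$ is $\leq_{\rm I}$-below, hence neither above nor incomparable with, the value of every other cell.

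For the converse, assume $f$ is Boolean normal, has no two adjacent cells valued $1$ and $0$, and satisfies $f(\sfrac{1}{2},\sfrac{1}{2}) \leq_{\rm I} f(a,b)$ for all $a,b$. Given $\langle a_1,a_2\rangle \leq_{\rm I}^{comp} \langle b_1,b_2\rangle$, I want $f(a_1,a_2) \leq_{\rm I} f(b_1,b_2)$; the equality case is trivial, so take the inequality strict. Since the only nontrivial instances of $\leq_{\rm I}$ are $\sfrac{1}{2} \leq_{\rm I} 0$ and $\sfrac{1}{2} \leq_{\rm I} 1$, every coordinate $i$ with $a_i \neq b_i$ must have $a_i = \sfrac{1}{2}$ and $b_i \in \{0,1\}$, and at least one such $i$ exists. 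If both coordinates of $\langle a_1,a_2\rangle$ equal $\sfrac{1}{2}$, the hypothesis on $f(\sfrac{1}{2},\sfrac{1}{2})$ closes the case. Otherwise exactly one coordinate is $\sfrac{1}{2}$; say $a_1 = \sfrac{1}{2}$ and $a_2 \in \{0,1\}$ (the other case being symmetric with rows and columns interchanged). Strictness gives $b_1 \in \{0,1\}$, and since $a_2$ is Boolean, hence $\leq_{\rm I}$-maximal, $b_2 = a_2$. The cells $\langle \sfrac{1}{2}, a_2\rangle$ and $\langle b_1, a_2\rangle$ lie in the same column, in the middle row and a Boolean row respectively, so they are vertically adjacent; by Boolean normality $f(b_1,a_2) = f^{\CL}(b_1,a_2) \in \{0,1\}$, and by the adjacency condition $\{f(\sfrac{1}{2},a_2), f(b_1,a_2)\} \neq \{0,1\}$. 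Hence either $f(\sfrac{1}{2},a_2) = \sfrac{1}{2}$, or $f(\sfrac{1}{2},a_2)$ is a Boolean value that, failing to form $\{0,1\}$ together with $f(b_1,a_2)$, must equal it; either way $f(\sfrac{1}{2},a_2) \leq_{\rm I} f(b_1,a_2) = f(b_1,b_2)$, as desired.

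I expect the crux to be precisely this last case, and in particular the observation that the bare adjacency condition is too weak on its own: by itself it leaves open the possibility $f(\sfrac{1}{2},a_2) = 0$ with $f(b_1,a_2) = \sfrac{1}{2}$, which are $\leq_{\rm I}$-incomparable. It is Boolean normality that saves the day, by pinning $f(b_1,a_2)$ to a Boolean value so that ``no $1/0$ clash'' upgrades to ``equal or $\sfrac{1}{2}$''. Everything else is routine bookkeeping about the three-element order $\leq_{\rm I}$ and the layout of the $3\times 3$ matrix.
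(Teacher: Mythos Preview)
Your proof is correct and follows essentially the same approach as the paper's: the left-to-right direction is identical, and for the converse both arguments split on whether the $\leq_{\rm I}^{comp}$-lower input is $(\sfrac{1}{2},\sfrac{1}{2})$ or of the form $(\sfrac{1}{2},c)$/$(c,\sfrac{1}{2})$ with $c$ classical, using the $f(\sfrac{1}{2},\sfrac{1}{2})$ hypothesis in the first case and Boolean normality plus the adjacency condition in the second. The only cosmetic difference is that you argue the converse directly while the paper argues by contraposition; your version is in fact somewhat more explicit about why Boolean normality is needed (to pin $f(b_1,a_2)$ to a classical value so that the no-$0/1$-clash condition forces $f(\sfrac{1}{2},a_2)\in\{\sfrac{1}{2},f(b_1,a_2)\}$).
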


\begin{proof} From left-to-right, suppose that $f(1/2,1/2)$ is incomparable with or greater than the value of some other cell. Since $(1/2,1/2)<^{comp}_{\rm I}(x,y)$ for all other cells $(x,y)$, this violates monotonicity. The other condition is entailed by Fact \ref{fact:mon}. 

From right-to-left, suppose that $f$ is normal but not monotonic. Then there exist $(x,y)\leq^{comp}_{\rm I} (x',y')$, but either $f(x,y)\geq^{comp}_{\rm I}f(x',y')$, or $f(x,y)$ and $f(x',y')$ are incomparable. If $(x,y)$ is of type $(c,1/2)$ or $(1/2,c)$ with $c$ classical, and $(x',y')$ of type $(c,c)$, then necessarily one of them is 1 and the other 0. If $(x,y)$ is $(1/2, 1/2)$, then the violation is necessary because $f(x,y)=c$ and $f(x',y')$ is $1/2$ or incomparable.
\end{proof}

\end{appendix}

\bibliographystyle{abbrv}
\bibliography{references}

\end{document}